\numberwithin{equation}{section}
\newtheorem{theorem}{Theorem}[section]
\newtheorem{lem}[theorem]{Lemma}
\newtheorem{thm}[theorem]{Theorem}
\def\s{\,\,\,\,}
\title{Gap theorems for complete submanifolds in the hyperbolic space}
\author{Jianling Liu and Yong Luo}%\footnote{Corresponding author.}
\date{}
\begin{document} 
\maketitle
\begin{abstract}
Based on the seminal Simons' formula, Shen \cite{Shen} and Lin-Xia \cite{LX} obtained gap theorems for compact minimal submanifolds in the unit sphere in the late 1980's. Then due to the effect of Xu \cite{Xu}, Ni \cite{Ni}, Yun \cite{Yun} and Xu-Gu \cite{XuG}, we achieved a comprehensive understanding of gap phenomena of complete submanifolds with parallel mean curvature vector field in the sphere or in the Euclidean space.  But such kind of results in case of the hyperbolic space were obtained by Wang-Xia \cite{XiaW}, Lin-Wang \cite{LW} and Xu-Xu \cite{XX} until relatively recently and are not quite complete so far. 

In this paper first we continue to study  gap theorems for complete submanifolds with parallel mean curvature vector field in the hyperbolic space, which generalize or extend several results in the literature. Second we prove a gap theorem for complete hypersurfaces with constant scalar curvature $n(1-n)$ in the hyperbolic space, which extends related results due to Bai-Luo \cite{BL2} in cases of the Euclidean space and the unit sphere.  Such kind of results in case of the hyperbolic space are more complicated, due to some extra bad terms in the Simons' formula, and one of main ingredients of our proofs is an estimate for the first eigenvalue of complete submanifolds in the hyperbolic space obtained by Lin \cite{Lin}.
\\
\\
{\bf Keywords:} Submanifolds, parallel mean curvature vector field, constant scalar curvature, the hyperbolic space.
\end{abstract}
\section{Introduction}
An important subject in global differential geometry is the study of rigidity theorems for complete submanifolds in the real space form. In the late 1960’s, Simons obtained the seminal Simons' formula and then Simons \cite{Sim}, Lawson \cite{Law} and Chern-do Carmo-Kobayashi \cite{CDK} used it to verify the famous rigidity theorem for compact minimal submanifolds in the unit sphere via pointwise pinching condition on the squared length of the second fundamental form.

Later on, mathematicians found that Simons' formula is also useful in deriving rigidity theorems for compelete submanifolds with parallel mean curvature vector field under integral curvature conditions. In the late 1980’s, Shen \cite{Shen} studied the rigidity problem for $n$-dimensional minimal hypersurfaces with non-negative Ricci curvature in the unit sphere via integral pinching condition on the $L^n$ norm of the length of the second fundamental form. In \cite{LX}, Lin–Xia obtained an $L^n$-pinching theorem for $2n$-dimensional minimal submanifolds in the unit sphere provided the Euler characteristic not greater than two. More generally, the gap problem for submanifolds in the unit sphere or in the Euclidean space  with parallel mean curvature vector field was investigated by Xu \cite{Xu} and Xu–Gu \cite{XuG}, respectively. Their results can be summarized as follows.
\begin{thm}[\cite{Xu, XuG}]
Let $M^n(n\geq 3)$ be a complete submanifold with parallel mean curvature vector field in the unit sphere or the Euclidean space. Denote by $Å$ the trace free second fundamental form of $M$. There exists an explicit positive constant $C_1(n)$ depending only on $n$,
such that if
$$\int_M|Å|^n \text{d}v<C_1(n),$$
then $Å=0$, i.e. $M$ is totally umbilical. In particular, if $M$ is a minimal submanifold in the Euclidean space, then it is totally geodesic.
\end{thm}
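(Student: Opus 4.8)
The plan is to run the classical Simons-formula integral-pinching scheme, using the $L^n$-smallness of $|Å|$ to absorb the nonlinear terms through a Sobolev inequality for submanifolds; the totally geodesic conclusion in the minimal Euclidean case is the clean model, and the general case only adds mean-curvature corrections. First I would record a Simons-type inequality for the trace-free second fundamental form of a submanifold with parallel mean curvature vector in a space form of constant curvature $c\in\{0,1\}$. Because the mean curvature vector is parallel, its length $|H|$ is constant and $\nabla Å=\nabla A$; decomposing $A$ into its trace-free part $Å$ and its mean-curvature part in the classical Simons formula and estimating the cubic curvature term by the Chern--do Carmo--Kobayashi and Okumura inequalities yields, schematically,
$$\frac12\Delta|Å|^2 \geq |\nabla Å|^2 + \big(nc+n|H|^2\big)|Å|^2 - a_n|H|\,|Å|^3 - b_n|Å|^4,$$
with explicit $a_n,b_n>0$ (for higher codimension the constants change, but the favourable quadratic term $n|H|^2|Å|^2$ persists). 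Combining this with a Kato-type inequality $|\nabla Å|^2\geq(1+\kappa_n)|\nabla|Å||^2$, $\kappa_n\geq0$, converts it into a scalar inequality for $f=|Å|$.

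Next, in the model minimal Euclidean case ($c=0$, $H=0$), the inequality reads $\tfrac12\Delta|Å|^2\geq(1+\kappa_n)|\nabla f|^2-b_nf^4$; I would test it against a compactly supported cutoff $\phi^2$ and integrate by parts, reaching, after a Cauchy--Schwarz treatment of the cross term, a bound of the form
$$\int_M|\nabla(f\phi)|^2 \;\lesssim\; b_n\int_M f^4\phi^2 + \int_M f^2|\nabla\phi|^2.$$
Writing $g=f\phi$ and invoking the Michael--Simon Sobolev inequality $\big(\int_M g^{\frac{2n}{n-2}}\big)^{\frac{n-2}{n}}\leq C_S\int_M|\nabla g|^2$, I would estimate the quartic term by Hölder as $\int_M f^2g^2\leq\big(\int_M f^n\big)^{2/n}\big(\int_M g^{\frac{2n}{n-2}}\big)^{\frac{n-2}{n}}$. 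The Sobolev norm of $g$ then appears on both sides with a coefficient equal to a universal constant times $\big(\int_M f^n\big)^{2/n}$; defining $C_1(n)$ by the requirement that this coefficient be $<1$ exactly when $\int_M|Å|^n<C_1(n)$, the pinching hypothesis lets me absorb that term, leaving $\big(\int_M g^{\frac{2n}{n-2}}\big)^{\frac{n-2}{n}}\lesssim\int_M f^2|\nabla\phi|^2$. Choosing the standard cutoffs exhausting the complete manifold and letting $\phi\to1$ (the finiteness of $\int_M f^{\frac{2n}{n-2}}$ just established makes the cutoff error vanish) forces $f\equiv0$, i.e. $Å=0$.

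For the general parallel-mean-curvature case I would run the same scheme, the only new point being the cubic term: a weighted Young inequality $a_n|H|f^3\leq\tfrac{a_n^2}{2n}|H|^2f^2+\tfrac{n}{2}f^4$ lets the favourable term $n|H|^2|Å|^2$ from the Simons inequality dominate the resulting quadratic remainder, the leftover $\tfrac{n}{2}f^4$ merely inflating the (still explicit) quartic coefficient, while in the sphere the extra $nc|Å|^2=n|Å|^2$ is a free bonus. I expect the main obstacle to be the bookkeeping of the mean-curvature terms: one must balance the favourable $n|H|^2|Å|^2$ not only against the cubic term but also against the unfavourable $|H|$-contribution that the Hoffman--Spruck Sobolev inequality for submanifolds carries (an extra $|H|^2\int_M g^2$), and carry out this balancing with constants that are explicit and uniform in the codimension so as to produce a genuine threshold $C_1(n)$ depending on $n$ alone. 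The non-compactness itself enters only through the final cutoff argument and is routine given the integral bounds in hand.
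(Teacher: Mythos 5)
Your outline assembles the right circle of ideas --- Simons' formula for the trace-free tensor, a Kato inequality, the Okumura/Santos bound on the cubic term, the Hoffman--Spruck Sobolev inequality, and H\"older-plus-absorption under the $L^n$-pinching --- and your bookkeeping plan for the mean-curvature terms (absorbing both the cubic term and the $|H|$-contribution of the Sobolev inequality into the favourable $n|H|^2|\mathring{A}|^2$ term) does close with codimension-uniform constants. The genuine gap is in the exhaustion step, and it sits exactly where this theorem differs from the hyperbolic-space results: the statement here carries \emph{no} growth hypothesis of the form $\limsup_{R\to\infty}R^{-2}\int_{B_R}|\mathring{A}|^2\text{d}v=0$. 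Because you test the Simons inequality against $\phi^2$, i.e.\ work at the quadratic level in $f=|\mathring{A}|$, the term surviving after absorption is the cutoff error
\begin{align*}
\int_M f^2|\nabla\phi|^2\text{d}v\le \frac{C}{R^2}\int_{B_{2R}\setminus B_R}f^2\text{d}v,
\end{align*}
and nothing in the hypotheses forces this to vanish as $R\to\infty$: the pinching controls $\int_M f^n\text{d}v$, and passing from $L^n$ to $L^2$ on the annulus costs a factor $\mathrm{Vol}(B_{2R})^{(n-2)/n}$, for which no a priori upper bound is available for a complete submanifold. Your remedy --- ``the finiteness of $\int_M f^{2n/(n-2)}\text{d}v$ just established makes the cutoff error vanish'' --- is circular: that finiteness is deduced from the very inequality whose right-hand side is this cutoff error, and even if granted, converting it into $R^{-2}\int_{B_{2R}\setminus B_R}f^2\text{d}v\to 0$ again requires volume growth of order $R^n$. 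This is precisely why Theorems 1.2, 1.4, 1.5, \ref{th:1.3} and \ref{main thm2}, which all run the $\phi^2$-scheme, \emph{assume} the quadratic growth condition, while the present theorem does not.

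The proofs of Xu \cite{Xu} and Xu--Gu \cite{XuG} (compare also the proof of Theorem \ref{main theorem} in Section 4, where the test power is $k=n/2$) avoid this by multiplying the scalar Simons inequality by $f^{n-2}\phi^2$ instead of $\phi^2$, so that every term scales like $f^n$: the Sobolev/H\"older absorption then runs on $g=f^{n/2}\phi$, and the cutoff error becomes $CR^{-2}\int_M f^n\text{d}v$, which tends to zero \emph{unconditionally}, since $\int_M f^n\text{d}v<C_1(n)<\infty$ is the hypothesis itself. One then gets $\nabla f^{n/2}\equiv 0$, and excluding a nonzero constant value of $f$ --- which you wave off as routine --- still needs an argument: in the sphere, or when $H\neq 0$, the favourable quadratic terms $n|\mathring{A}|^2$, $n|H|^2|\mathring{A}|^2$ survive the absorption with a positive coefficient and force $f\equiv 0$ directly, while for a minimal submanifold of the Euclidean space one notes that a nonzero constant $f$ with $\int_M f^n\text{d}v<\infty$ would force finite volume, contradicting the Hoffman--Spruck inequality applied to cutoff functions. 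With the change of test function your argument becomes the classical one; without it, it does not close.
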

Note that the latter case extended Ni’s \cite{Ni} and Yun's \cite{Yun} gap theorem for minimal hypersurfaces in the Euclidean space. In \cite{XiaW}, Xia-Wang studied the integral rigidity problem for complete minimal submanifolds in the hyperbolic space. They proved 
\begin{thm} [\cite{XiaW}]
Let $M$ be an $n(\geq 5)$-dimensional complete minimal submanifold in the hyperbolic space $\mathbb{H}^{n+m}$. The second fundamental
form $A$ of $M$ satisfying
$$\limsup_{R\to\infty}\frac{\int_{B_R(q)}|A|^2\text{d}v}{R^2}=0,$$
where $B_R(q)$ denotes the geodesic ball on $M$ of radius $R$ centered at $q \in M$. If
$$\int_M|A|^n \text{d}v<C_2(n,m),$$
then $M$ is totally geodesic. Here $C_2(n, m)$ is an explicit positive constant depending only on $n$ and $m$.
\end{thm}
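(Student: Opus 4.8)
The plan is to run a Simons-type integral estimate in which the Hoffman--Spruck Sobolev inequality turns the dangerous quartic curvature term into the pinched quantity $\int_M|A|^n$, while the extra negative term produced by the curvature $-1$ of $\mathbb{H}^{n+m}$ is absorbed by a first-eigenvalue (Poincar\'e-type) inequality; throughout, the subquadratic growth hypothesis is what lets me discard the boundary terms in this non-compact situation and pass to the limit. First I would record the Simons formula for a minimal $M^n\subset\mathbb{H}^{n+m}$. Since the ambient curvature is $-1$, the standard quadratic estimates (Chern--do~Carmo--Kobayashi, Li--Li) give the pointwise inequality
\[
\tfrac12\Delta|A|^2\ \geq\ |\nabla A|^2-n|A|^2-\Bigl(2-\tfrac1m\Bigr)|A|^4 .
\]
The term $-n|A|^2$ is precisely the ``bad'' term absent (or of favourable sign) in the Euclidean and spherical cases, and it is the whole source of difficulty here. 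Setting $f=|A|$ and inserting a refined Kato inequality $|\nabla A|^2\geq(1+\tfrac{2}{nm})|\nabla f|^2$ turns this into
\[
f\,\Delta f\ \geq\ \tfrac{2}{nm}|\nabla f|^2-nf^2-\Bigl(2-\tfrac1m\Bigr)f^4 .
\]

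Next I would fix $q\in M$ and a cutoff $\phi$ with $\phi\equiv1$ on $B_R(q)$, $\mathrm{supp}\,\phi\subset B_{2R}(q)$, $|\nabla\phi|\leq C/R$, multiply by $\phi^2$, integrate over $M$, integrate by parts and absorb the resulting cross term by Cauchy--Schwarz. This produces
\[
c_1\int_M|\nabla f|^2\phi^2\ \leq\ n\int_M f^2\phi^2+\Bigl(2-\tfrac1m\Bigr)\int_M f^4\phi^2+\frac{C}{R^2}\int_{B_{2R}(q)}f^2,
\]
with $c_1=c_1(n,m)>0$. The quartic term I would control by H\"older together with the Hoffman--Spruck Sobolev inequality on the Cartan--Hadamard manifold $\mathbb{H}^{n+m}$ (the mean-curvature contribution drops out because $M$ is minimal),
\[
\int_M f^4\phi^2\ \leq\ \Bigl(\int_{\mathrm{supp}\,\phi}f^n\Bigr)^{2/n}\Bigl(\int_M(f\phi)^{\frac{2n}{n-2}}\Bigr)^{\frac{n-2}{n}}\ \leq\ C_s\Bigl(\int_M f^n\Bigr)^{2/n}\int_M|\nabla(f\phi)|^2 .
\]
Expanding $|\nabla(f\phi)|^2$, the leading gradient part can be moved to the left-hand side once $C_2(n,m)$ is fixed so small that $(2-\tfrac1m)C_sC_2^{2/n}<c_1$; this inequality is exactly what pins down the explicit threshold $C_2(n,m)$, and what remains of the quartic term is another boundary contribution of order $R^{-2}\int_{B_{2R}(q)}f^2$.

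The genuine obstacle is the bad term $n\int_M f^2\phi^2$, which cannot be absorbed by pinching alone. Here I would invoke a first-eigenvalue estimate for complete minimal submanifolds of $\mathbb{H}^{n+m}$, of the form $\lambda_1(M)\geq\tfrac{(n-1)^2}{4}$ (see \cite{Lin}), i.e. the Poincar\'e inequality $\lambda_1(M)\int_M g^2\leq\int_M|\nabla g|^2$ applied to $g=f\phi$; this rewrites $n\int_M f^2\phi^2$ as a multiple of $\int_M|\nabla(f\phi)|^2$, again split into an interior part and an $R^{-2}\int_{B_{2R}(q)}f^2$ boundary part. Ensuring that, after both the Sobolev and the Poincar\'e absorptions, the coefficient of $\int_M|\nabla f|^2\phi^2$ stays strictly positive -- which is where the dimension hypothesis $n\geq5$ and, at the borderline, the growth hypothesis must be used -- is the technical heart of the argument. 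Granting this, every surviving right-hand term is dominated by $\tfrac{C}{R^2}\int_{B_{2R}(q)}f^2$, which tends to $0$ as $R\to\infty$ by assumption. Letting $R\to\infty$ forces $\int_M|\nabla f|^2=0$, so $|A|$ is constant; since a complete minimal submanifold of the Cartan--Hadamard space $\mathbb{H}^{n+m}$ is non-compact with infinite volume, the finiteness of $\int_M|A|^n$ then forces $|A|\equiv0$, i.e. $M$ is totally geodesic.
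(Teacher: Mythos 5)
Your proposal follows essentially the same route as this paper's own argument (the statement itself is quoted from Xia--Wang, but the paper's proof of Theorem \ref{th:1.3} subsumes it as the minimal case): Simons' formula with the Chern--do Carmo--Kobayashi/Li--Li quadratic bound on the quartic terms, a refined Kato inequality, a cutoff function and integration by parts, the Hoffman--Spruck Sobolev inequality to absorb the quartic term using the smallness of $\int_M|A|^n\,\text{d}v$, a first-eigenvalue bound $\lambda_1(M)\ge\frac{(n-1)^2}{4}$ to absorb the bad term $n\int_M f^2\phi^2\,\text{d}v$, and the growth hypothesis to discard the $R^{-2}$ error terms. Even your endgame (a complete minimal submanifold of a Cartan--Hadamard manifold has infinite volume, which follows from Lemma \ref{lem:2.2}, so a constant $|A|$ with $\int_M|A|^n\,\text{d}v<\infty$ must vanish) is a legitimate variant of the paper's endgame (subquadratic volume growth plus Cheng--Yau would give $\lambda_1(M)=0$, contradicting the eigenvalue bound).

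The gap sits exactly at the step you call ``the technical heart'' and then leave unverified, and with your stated constants it genuinely fails. After the eigenvalue absorption, the bad term returns a multiple $\frac{4n}{(n-1)^2}(1+\kappa)$ of $\int_M\phi^2|\nabla f|^2\,\text{d}v$, so (letting the pinching constant and all Cauchy--Schwarz parameters tend to $0$) the argument closes only if
\begin{equation*}
1+\varepsilon_{\mathrm{Kato}}\;>\;\frac{4n}{(n-1)^2},
\end{equation*}
where $1+\varepsilon_{\mathrm{Kato}}$ is the constant in the refined Kato inequality. You quote $\varepsilon_{\mathrm{Kato}}=\frac{2}{nm}$; for $n=5$ and $m\ge2$ this gives $1+\frac{2}{5m}\le\frac{6}{5}<\frac{5}{4}=\frac{4\cdot 5}{(5-1)^2}$, so the coefficient of $\int_M\phi^2|\nabla f|^2\,\text{d}v$ cannot be made positive, and your proof as written only covers $n\ge6$ (plus $n=5$, $m=1$), not the claimed $n\ge5$. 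The missing ingredient is the codimension-independent refined Kato inequality of Lemma \ref{lem:2.5} (Xu--Xu), $|\nabla A|^2\ge\bigl(1+\frac{2}{n}\bigr)\bigl|\nabla|A|\bigr|^2$, valid for submanifolds with parallel mean curvature vector in any codimension; with $\varepsilon_{\mathrm{Kato}}=\frac{2}{n}$ the requirement becomes $(n+2)(n-1)^2>4n^2$, which holds precisely for $n\ge5$ --- this is exactly where the hypothesis $n\ge5$ enters. Relatedly, your suggestion that the growth hypothesis is needed ``at the borderline'' of this positivity check is off: the positivity is a purely algebraic condition on $n$ together with the smallness of $C_2(n,m)$, while the growth hypothesis is used only to kill the $R^{-2}\int_{B_{2R}(q)}f^2\,\text{d}v$ terms and in the final dichotomy.
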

Gap theorems for complete submanifolds with parallel mean curvature vector field in the hyperbolic space were studied by Xu and his collaborators, where they distinguished cases of $H^2>1$ and $H^2\leq 1$. In case of $H^2>1$, Xu–Huang–Gu–He \cite{XHGH} obtained the following theorem.
\begin{thm} [\cite{XHGH}]
Let $M$ be an $n(\geq 3)$-dimensional complete submanifold with parallel mean curvature vector field in the hyperbolic space $\mathbb{H}^{n+m}$.
If $H^2>1$ and
$$\int_M|Å|^n \text{d}v<C_3(n,H),$$
then $Å=0$ and $M$ is a totally umbilical submanifold $\mathbb{S}^n(\frac{1}{\sqrt{H^2-1}})$. Here $C_3(n, H)$ is an explicit positive constant depending on $n$ and $H$.
\end{thm}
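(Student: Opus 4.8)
The plan is to derive a Simons-type elliptic inequality for the norm of the trace-free second fundamental form and then absorb the ``bad'' nonlinear terms using the smallness of $\int_M|Å|^n\,dv$ through a Sobolev inequality. Write $\phi=Å$ and $f=|\phi|$. Since the mean curvature vector field is parallel, the terms in Simons' formula involving $\nabla^\perp H$ drop out, and one is left with an identity of the form $\tfrac12\Delta|\phi|^2=|\nabla\phi|^2+(\text{curvature and algebraic terms})$. Specializing the ambient curvature to $c=-1$ and estimating the algebraic (Cauchy--Schwarz type) terms as in the work of Xu \cite{Xu} and Xu--Gu \cite{XuG}, I would obtain a pointwise inequality of the shape
\begin{equation}
\tfrac12\Delta|\phi|^2\ \ge\ |\nabla\phi|^2+n(H^2-1)|\phi|^2-\frac{n(n-2)}{\sqrt{n(n-1)}}\,H\,|\phi|^3-\Big(2-\tfrac1m\Big)|\phi|^4.
\end{equation}
The decisive feature of the hypothesis $H^2>1$ is that the zeroth-order coefficient $n(H^2-1)$ is strictly positive; this is precisely what compensates the negative contribution $nc|\phi|^2=-n|\phi|^2$ coming from the hyperbolic ambient curvature, and it plays the role of the positive curvature term that drives the sphere and Euclidean arguments of Theorem~1.1.

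Next I would pass to $f=|\phi|$, which satisfies (away from its zero set, and globally in the distributional sense) $f\Delta f\ge|\nabla\phi|^2-|\nabla f|^2+n(H^2-1)f^2-c_1(n)\,H f^3-c_2(n,m)f^4$, and then invoke the refined Kato inequality $|\nabla\phi|^2\ge(1+\tfrac{2}{n})|\nabla f|^2$ valid for parallel-mean-curvature submanifolds in order to retain a favourable gradient term. Multiplying by $f^{q}$ for a suitable exponent $q=q(n)$, integrating against a compactly supported cutoff $\eta^2$, and integrating by parts converts the pointwise inequality into an integral one in which the dangerous quartic contribution $\int_M f^{q+4}\eta^2\,dv$ must be controlled.

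The core estimate is then the Michael--Simon / Hoffman--Spruck Sobolev inequality for submanifolds,
\begin{equation}
\Big(\int_M g^{\frac{n}{n-1}}\,dv\Big)^{\frac{n-1}{n}}\le C(n)\int_M\big(|\nabla g|+H|g|\big)\,dv,
\end{equation}
applied to $g=f^{\beta}\eta$. This bounds the leading quartic term by $\big(\int_M f^n\,dv\big)^{2/n}$ times the very integral quantity I am estimating, so that the pinching condition $\int_M|Å|^n\,dv<C_3(n,H)$ forces the resulting coefficient to be strictly positive. Letting the cutoff radius tend to infinity (completeness supplies the exhaustion) annihilates the boundary contributions and yields $\int_M f^{q+2}\,dv=0$, hence $f\equiv0$ and $Å=0$. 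A totally umbilical submanifold with constant $H^2>1$ in $\mathbb{H}^{n+m}$ is, by the standard classification of umbilical submanifolds of space forms, the round sphere $\mathbb{S}^n\big(1/\sqrt{H^2-1}\big)$, which finishes the proof.

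The main obstacle is the bookkeeping of the extra negative curvature term: one must track constants precisely so that, for $H^2>1$, the positive coefficient $n(H^2-1)$ survives after the cubic term $Hf^3$ is split by Young's inequality between $f^2$ and $f^4$, and so that the admissible threshold $C_3(n,H)$ stays explicit and positive. Unlike the minimal hyperbolic case of Xia--Wang \cite{XiaW}, no auxiliary volume-growth hypothesis is needed here, precisely because the condition $H^2>1$ already provides a positive spectral and Sobolev gain.
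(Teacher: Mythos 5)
Two things first: the paper you are being compared against never proves this statement at all --- it is Theorem 1.3 of the introduction, quoted from Xu--Huang--Gu--He \cite{XHGH} as background --- so the only meaningful comparison is with the method the authors use in Section 3 for their own Theorems \ref{th:1.3} and \ref{main thm2}, which is precisely the scheme you outline. Within that scheme your pointwise work is essentially right: Simons' formula (\ref{eq:2.6}) does yield the coefficient $n(H^2-1)$ once the terms $-n|Å|^2$ and $nH^2|Å|^2$ are kept together, the cubic term is controlled by Santos' inequality (Lemma \ref{lem:2.6}) and Young's inequality, the Kato-type improvement is Lemma \ref{lem:2.5}, and a totally umbilical submanifold of $\mathbb{H}^{n+m}$ with constant $H^2>1$ is indeed $\mathbb{S}^n(1/\sqrt{H^2-1})$. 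Your instinct to keep the mean curvature term in Hoffman--Spruck (Lemma \ref{lem:2.2}) is also necessary: the pure-gradient Sobolev inequality of Lemma \ref{lem:2.3} requires $D(n)n\|H\|_n<1$, which is hopeless for a constant $H>1$ on a manifold of large or infinite volume, so the extra $H^2$-weighted $L^2$ term must be carried along and absorbed into $n(H^2-1)\int \eta^2 |Å|^2$; this is exactly where $C_3$ acquires its $H$-dependence.

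The genuine gap is the passage to the limit in the cutoff radius, and it sits exactly under your closing claim that ``no auxiliary volume-growth hypothesis is needed here, precisely because the condition $H^2>1$ already provides a positive spectral and Sobolev gain.'' After all absorptions your inequality has the form
\begin{equation*}
c(n,H)\int_M \eta_R^2\bigl(|\nabla f|^2+f^2\bigr)\,\text{d}v\;\le\;C\int_M |\nabla \eta_R|^2 f^2\,\text{d}v\;\le\;\frac{C'}{R^2}\int_{B_{2R}\setminus B_R} f^2\,\text{d}v,\qquad f=|Å|,
\end{equation*}
and you must show the right-hand side tends to zero. But you only know $f\in L^n(M)$, so H\"older gives $\int_{B_{2R}}f^2\le \|f\|_{L^n}^2\,\mathrm{Vol}(B_{2R})^{(n-2)/n}$, and nothing established at this stage excludes exponential volume growth of a complete submanifold of $\mathbb{H}^{n+m}$; the reverse inequality $\varphi(R)\le C'R^{-2}\varphi(2R)$ for $\varphi(R)=\int_{B_R}f^2$ forces $\varphi\equiv 0$ only under an a priori subexponential growth bound, which you do not have. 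This is precisely why every complete-noncompact theorem in this paper (Theorems 1.2, 1.4, 1.5, \ref{th:1.3}, \ref{main thm2}) carries the hypothesis (\ref{eq:1.1}), which is invoked twice at the end of the proof of Theorem \ref{th:1.3}. Nor is there a spectral gain to appeal to in the regime $H^2>1$: the Cheung--Leung estimate \cite{CL} gives $\lambda_1(M)>0$ only when $|H|<\frac{n-1}{n}$, and Lin's estimate (Lemma \ref{lem:2.4}) again needs $D(n)n\|H\|_n<1$; both fail here. The positive term $n(H^2-1)f^2$ is a zeroth-order potential in a differential inequality, not a lower bound on $\lambda_1(M)$, and by itself it cannot kill the cutoff term (on $\mathbb{H}^n$ one has $\frac{(n-1)^2}{4}\int\eta^2\le\int|\nabla\eta|^2$ for all compactly supported $\eta$, yet $1\not\equiv 0$). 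To prove the theorem as stated you need an additional mechanism your outline omits --- for instance, first upgrading the $L^n$ smallness of $|Å|$ to a pointwise bound by Moser iteration, then using the Gauss equation to get $\mathrm{Ric}_M\ge (n-1)(H^2-1)-c(n)\bigl(H|Å|+|Å|^2\bigr)>0$, so that Myers' theorem makes $M$ compact and Simons' inequality can be integrated with no cutoff at all. Without a step of this kind, the argument as you wrote it does not close.
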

In case of $H^2\leq1$, Xu-Xu obtained:
\begin{thm} [\cite{XX}]\label{XX's thm}
Let $M$ be an $n(\geq 5)$-dimensional complete submanifold with parallel mean curvature vector field in the hyperbolic space $\mathbb{H}^{n+m}$, whose trace free second fundamental form $Å$ satisfies 
$$\limsup_{R\to\infty}\frac{\int_{B_R(q)}|Å|^2\text{d}v}{R^2}=0.$$
If the mean curvature $|H|\leq\gamma(n)$, and 
$$(\int_M|Å|^n \text{d}v)^\frac{2}{n}+\frac{2n(n-2)}{3\sqrt{n(n-1)}}H(\int_M|Å|^\frac{n}{2} \text{d}v)^\frac{2}{n}\leq C_4(n),$$ then $Å=0$, i.e. $M$ is totally umbilical. Here $B_R(q)$ denotes the geodesic ball on $M$ of radius $R$ centered at $q\in M$, $\gamma(n)=\frac{1}{5}$ for $n=5,6$, $\gamma(n)=1$ for $n\geq 7$, and $C_4(n)$ is an explicit positive constant depending only on $n$.

In particular, if $H^2<1$, then $M=\mathbb{H}^n(H^2-1)$, and if $H^2=1$,
then $M=\mathbb{R}^n$.

\end{thm}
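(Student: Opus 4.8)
The plan is to run the Simons--Sobolev integral method, reinforced by Lin's first--eigenvalue estimate to neutralize the reaction term of the wrong sign coming from the negative ambient curvature. Setting $u=|\mathring A|$ and using that the mean curvature vector is parallel, I would first specialize Simons' identity to $M^n\subset\mathbb{H}^{n+m}$. Estimating the quartic and cubic curvature terms by the usual algebraic (Cauchy--Schwarz / Lagrange multiplier) bounds and invoking the refined Kato inequality for the traceless second fundamental form, this yields a pointwise differential inequality
\[
u\,\Delta u \;\geq\; \kappa_n\,|\nabla u|^2 \;-\;\Big(2-\tfrac1m\Big)u^4 \;-\;\tfrac{n(n-2)}{\sqrt{n(n-1)}}\,|H|\,u^3 \;+\; n\big(H^2-1\big)u^2,
\]
where $\kappa_n>0$ is the surplus gradient coefficient furnished by refined Kato. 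The last summand is the crux: since $|H|\le\gamma(n)\le1$ we have $n(H^2-1)u^2\le0$, so it is a genuine obstruction absent in the Euclidean and spherical cases, and it is precisely why the combinatorial constant $\frac{2n(n-2)}{3\sqrt{n(n-1)}}$ and the cubic/quartic structure of the hypothesis appear.

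Next I would test the inequality. Fix a parameter $q\ge0$ and a cutoff $\phi$ equal to $1$ on $B_R(q)$, supported in $B_{2R}(q)$, with $|\nabla\phi|\le C/R$. Multiplying by $u^{2q}\phi^2\ge0$, integrating over $M$, integrating by parts, and absorbing the cross term $\int u^{2q+1}\phi\,\nabla\phi\cdot\nabla u$ by Young's inequality against the Kato surplus, I obtain, with $w=u^{1+q}$,
\[
\delta_q\!\int_M \phi^2|\nabla w|^2 \;-\; n\!\int_M \phi^2 w^2 \;\le\; \int_M\!\Big[\big(2-\tfrac1m\big)u^2+\tfrac{n(n-2)}{\sqrt{n(n-1)}}|H|u\Big]\phi^2 w^2 \;+\;\mathrm{Err}(R),
\]
where $\delta_q>0$ and $\mathrm{Err}(R)$ collects the $|\nabla\phi|^2w^2$ terms. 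The growth hypothesis $\limsup_{R\to\infty}R^{-2}\int_{B_R(q)}|\mathring A|^2=0$ is exactly the device that forces $\mathrm{Err}(R)\to0$, so the estimate persists on all of $M$ in the limit. To dispose of the bad term $-n\int\phi^2w^2$ I would now invoke Lin's estimate: when $|H|\le\gamma(n)$ the bottom of the spectrum of $M$ satisfies $\lambda_1(M)\ge\Lambda(n,H)$ with $\Lambda(n,H)>n/\delta_q$ for the admissible range of $q$, whence $n\int\phi^2w^2\le \frac{n}{\lambda_1}\int|\nabla(\phi w)|^2$ leaves a strictly positive multiple of the Dirichlet energy on the left (again up to an $\mathrm{Err}(R)$ correction).

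The final step is to convert this surviving Dirichlet energy into an $L^{2n/(n-2)}$ quantity via the Michael--Simon Sobolev inequality for submanifolds, applied to $\phi w$, which also produces the extra $H^2\int(\phi w)^2$ contribution that is reabsorbed by the same eigenvalue estimate. Then Hölder's inequality pairs the quartic term with $\big(\int_M|\mathring A|^n\big)^{2/n}$ and the cubic term with $\big(\int_M|\mathring A|^{n/2}\big)^{2/n}$, so the left side dominates
\[
\Big[\,C(n,q)^{-1}-\big(\textstyle\int_M|\mathring A|^n\big)^{\frac2n}-\tfrac{2n(n-2)}{3\sqrt{n(n-1)}}H\big(\textstyle\int_M|\mathring A|^{\frac n2}\big)^{\frac2n}\Big]\Big(\textstyle\int_M(\phi w)^{\frac{2n}{n-2}}\Big)^{\frac{n-2}{n}}.
\]
The pinching assumption makes the bracket positive, forcing $\int_M(\phi w)^{2n/(n-2)}\le0$, i.e. $w\equiv0$ and $\mathring A\equiv0$; optimizing over $q$ is what produces the sharp constant $C_4(n)$. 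For the concluding identification, once $\mathring A=0$ the submanifold is totally umbilical, and the classification of totally umbilical submanifolds of $\mathbb{H}^{n+m}$ in the regime $H^2\le1$ gives $\mathbb{H}^n(H^2-1)$ when $H^2<1$ and the horosphere $\mathbb{R}^n$ when $H^2=1$. I expect the main obstacle to be the bookkeeping that makes the eigenvalue estimate actually win: one must verify that for the permissible parameter range the lower bound $\Lambda(n,H)$ from Lin's theorem genuinely exceeds $n/\delta_q$ — this is where the thresholds $\gamma(n)=\frac15$ for $n=5,6$ and $\gamma(n)=1$ for $n\ge7$ are forced — and that the same spectral bound simultaneously absorbs the Sobolev $H^2$-term while leaving enough Dirichlet energy to beat the nonlinear terms. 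Tracking all three competing constants ($\delta_q$, $\lambda_1$, and the Sobolev constant) through the optimization in $q$, so that the final bracket assembles into a clean $C_4(n)$, is the delicate part.
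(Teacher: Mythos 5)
First, a caveat about the comparison itself: the paper does not prove this statement. It is Theorem \ref{XX's thm}, quoted from Xu--Xu \cite{XX}, and the paper only records that the ``main ingredient'' of Xu--Xu's proof is the first-eigenvalue estimate of Cheung--Leung \cite{CL}, i.e.\ a lower bound for $\lambda_1(M)$ under a \emph{pointwise} bound on the mean curvature. Your overall architecture (Simons' formula, refined Kato inequality, cutoff test functions, Hoffman--Spruck Sobolev inequality, absorption of the negative linear term by a spectral bound, H\"older pairing of the quartic and cubic terms with $(\int_M|\mathring{A}|^n)^{2/n}$ and $(\int_M|\mathring{A}|^{n/2})^{2/n}$) is the standard one and coincides with the scheme the paper runs in Section 3 for its Theorems \ref{th:1.3} and \ref{main thm2}.

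The genuine gap is the spectral step. You invoke ``Lin's estimate'' to get $\lambda_1(M)\ge\Lambda(n,H)>0$ from the pointwise hypothesis $|H|\le\gamma(n)$. But Lin's estimate, as recorded in the paper (Lemma \ref{lem:2.4}), has the \emph{integral} hypothesis $D(n)n\|H\|_{L^n(M)}<1$, and under the hypotheses of Theorem \ref{XX's thm} this hypothesis fails identically whenever $H\neq0$: parallel mean curvature forces $|H|\equiv H$ to be constant, and inserting cutoff functions into the Hoffman--Spruck inequality (Lemma \ref{lem:2.2}) shows that every complete submanifold with constant mean curvature $H>0$ satisfies $D(n)n\|H\|_{L^n}\ge1$ --- if $\mathrm{Vol}(M)=\infty$ this is immediate, and if $\mathrm{Vol}(M)<\infty$ then exhausting $M$ by cutoffs gives $\mathrm{Vol}(M)^{(n-1)/n}\le D(n)nH\,\mathrm{Vol}(M)$, which is the same statement. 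So your argument can only ever treat $H=0$, whereas the content of the theorem (the conclusions $\mathbb{H}^n(H^2-1)$ and $\mathbb{R}^n$) concerns precisely $H\neq0$. The estimate that does what you describe, with a pointwise hypothesis on $H$, is Cheung--Leung's \cite{CL}, not Lin's; the distinction is exactly the point of this paper, whose Theorem \ref{main thm2} shows that if one insists on using Lin's lemma instead, the pointwise condition $|H|\le\gamma(n)$ must be \emph{replaced} by the integral condition $(\int_M|H|^n\mathrm{d}v)^{1/n}\le C_7(n)$.

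A second problem is that the step you defer as ``bookkeeping'' is, as you have set it up, false rather than delicate. Your integrated inequality carries the bad term with coefficient $n$ (you discarded the helpful $nH^2|\mathring{A}|^2$ from Simons' formula when passing from the pointwise to the integral inequality), so you need $\lambda_1(M)>n/\delta_q\ge n^2/(n+2)$ uniformly over $|H|\le\gamma(n)$. For $n\ge7$, where $\gamma(n)=1$, this is impossible: the umbilical models $\mathbb{H}^n(H^2-1)$ and the horosphere $\mathbb{R}^n$ satisfy every hypothesis of the theorem and have $\lambda_1=\frac{(n-1)^2(1-H^2)}{4}\to0$ as $H\to1$, so no valid lower bound $\Lambda(n,H)$ over this class can stay above $n^2/(n+2)$. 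A correct proof must keep $nH^2|\mathring{A}|^2$, so that the quantity to be absorbed is $n(1-H^2)|\mathring{A}|^2$, and must pair it with a spectral bound degenerating no faster than $1-H^2$ as $H\to1$; arranging this balance is where the thresholds $\gamma(n)=\frac15$ ($n=5,6$), $\gamma(n)=1$ ($n\ge7$) and the constant $C_4(n)$ actually come from, and it is the substance missing from the proposal.
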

One of  main ingredients in the proof of Theorem 1.4 is an estimate for the first eigenvalue depending on the dimension and the mean curvature due to Cheung-Leung \cite{CL}. In \cite{LW}, Lin-Wang studied gap theorem for complete noncompact submanifolds in $\mathbb{H}^n\times\mathbb{R}$ by new estimate for the first eigenvalue of complete noncompact submanifolds in $\mathbb{H}^n\times\mathbb{R}$. In particular their result implies the following interesting theorem.
\begin{thm} [\cite{LW}]
Assume that $M^n (n\geq12)$ is a complete noncompact submanifold with parallel mean curvature vector field in the hyperbolic space $\mathbb{H}^{n+m}$ and the second fundamental
form $A$ of $M$ satisfying
$$\limsup_{R\to\infty}\frac{\int_{B_R(q)}|A|^2\text{d}v}{R^2}=0,$$
where $B_R(q)$ denotes the geodesic ball on $M$ of radius $R$ centered at $q \in M$. Then there exists a positive constant $C_5(n)$ depending only
on $n$ such that if
$$\int_M|A|^n \text{d}v<C_5(n),$$
then $M$ is totally geodesic. 
\end{thm}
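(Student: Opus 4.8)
The plan is to run a Simons--Bochner argument against a cutoff, using the first-eigenvalue estimate to defeat the ``bad'' term coming from the negative ambient curvature and the smallness of $\int_M|A|^n\,\text{d}v$ to absorb the quartic term. For the first step I would apply Simons' formula to $|A|^2$. Because the mean curvature vector field is parallel, the derivative terms in $H$ drop out; estimating the quartic curvature terms by the Chern--do~Carmo--Kobayashi algebraic inequalities and using a refined Kato inequality $|\nabla A|^2\ge (1+\mu)\,\big|\nabla|A|\big|^2$ with $\mu=\mu(n,m)>0$, one is led to a pointwise inequality of the shape
$$|A|\,\Delta|A| \;\ge\; \mu\,\big|\nabla|A|\big|^2 \;-\; n\,|A|^2 \;-\; b(n,m)\,|A|^4 \;-\;(\text{lower-order $H$-terms}),$$
in which the term $-n|A|^2$ is produced by the curvature $-1$ of $\mathbb{H}^{n+m}$ and $b(n,m)$ is explicit. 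This bad linear term is exactly the obstruction absent in the Euclidean and spherical settings.

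Next, fix $q\in M$ and choose a cutoff $\eta$ with $\eta\equiv1$ on $B_R(q)$, supported in $B_{2R}(q)$, and $|\nabla\eta|\le C/R$. Multiplying the inequality above by $\eta^2$, integrating over $M$ and integrating by parts produces a bound of the form $\int_M|\nabla A|^2\eta^2 \le n\int_M|A|^2\eta^2 + b\int_M|A|^4\eta^2 + (\text{terms carrying }\nabla\eta)$. To control the bad term $n\int_M|A|^2\eta^2$ I would invoke the Cheung--Leung / Lin first-eigenvalue estimate for complete submanifolds of the hyperbolic space, $\lambda_1(M)\ge \Lambda(n,|H|)$ with $\Lambda(n,0)=\tfrac{(n-1)^2}{4}$, applied to $\phi=|A|\eta$:
$$\lambda_1(M)\int_M|A|^2\eta^2 \;\le\; \int_M\big|\nabla(|A|\eta)\big|^2.$$
Substituting this turns $n\int_M|A|^2\eta^2$ into $\tfrac{n}{\lambda_1}\int_M|\nabla(|A|\eta)|^2$, and since $\tfrac{n}{\lambda_1}\le\tfrac{4n}{(n-1)^2}<1$ precisely in the range forcing $n\ge 12$, the resulting gradient contribution can be moved to the left-hand side while leaving a strictly positive coefficient in front of $\int_M\big|\nabla|A|\big|^2\eta^2$.

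For the quartic term I would use Hölder's inequality followed by a Sobolev inequality for submanifolds of $\mathbb{H}^{n+m}$ (of Hoffman--Spruck type) applied to $f=|A|\eta$:
$$\int_M|A|^4\eta^2\,\text{d}v \;\le\; \Big(\int_M|A|^n\,\text{d}v\Big)^{2/n}\Big(\int_M(|A|\eta)^{\frac{2n}{n-2}}\,\text{d}v\Big)^{\frac{n-2}{n}},$$
the last factor being bounded by $C_S\int_M\big(|\nabla(|A|\eta)|^2+|H|^2|A|^2\eta^2\big)\,\text{d}v$. The pinching hypothesis $\int_M|A|^n\,\text{d}v<C_5(n)$ makes the prefactor $C_S\big(\int_M|A|^n\big)^{2/n}$ smaller than the positive coefficient obtained above, so the whole quartic contribution is absorbed. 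Finally the quadratic-growth hypothesis $\limsup_{R\to\infty}R^{-2}\int_{B_R(q)}|A|^2\,\text{d}v=0$ forces every remaining term (each carrying a factor $|\nabla\eta|^2\le C/R^2$) to vanish as $R\to\infty$, leaving $\int_M\big|\nabla|A|\big|^2\,\text{d}v=0$. Thus $|A|$ is constant; feeding this back into the eigenvalue inequality, whose right-hand side has by then become a pure boundary term tending to zero, yields $\int_M|A|^2\,\text{d}v=0$. Hence $A\equiv0$ and $M$ is totally geodesic.

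I expect the main obstacle to be the competition, within a single ``eigenvalue budget,'' between the bad term $-n|A|^2$ generated by Simons' formula in negative curvature and the companion term $|H|^2|A|^2$ forced by the hyperbolic Sobolev inequality: both must be dominated by $\lambda_1(M)$, and keeping the net coefficient of $\int_M\big|\nabla|A|\big|^2$ strictly positive after both have been reabsorbed is exactly what pins down the dimension restriction $n\ge 12$. A secondary subtlety is that the Sobolev inequality in $\mathbb{H}^{n+m}$ is not the scale-invariant Euclidean (Michael--Simon) one, so its curvature-correction terms cannot simply be discarded but must be tracked and reabsorbed; this is where the estimate for the first eigenvalue of complete submanifolds in the hyperbolic space becomes the decisive tool.
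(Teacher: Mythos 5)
Your proposal is correct and follows essentially the same route as the paper's own proof (given for its Theorem 1.6, which sharpens this statement to $n\ge 5$): Simons' formula with a refined Kato inequality, a cutoff plus integration by parts, H\"older together with the Hoffman--Spruck Sobolev inequality to absorb the quartic term under the $L^n$-pinching, the Cheung--Leung/Lin first-eigenvalue bound to absorb the bad linear term produced by the ambient curvature $-1$, and the quadratic-growth hypothesis to kill the cutoff terms in the limit $R\to\infty$. One inaccuracy worth noting: $\frac{4n}{(n-1)^2}<1$ already holds for $n\ge 6$ (not ``precisely'' in the range $n\ge 12$; Lin--Wang's dimension bound instead comes from their weaker eigenvalue estimate in $\mathbb{H}^n\times\mathbb{R}$), so your eigenvalue budget closes on the whole range $n\ge 12$ and your argument actually proves more than the stated theorem --- consistent with the paper, which combines the same budget with the Kato gain $|\nabla \mathring{A}|^2\ge(1+\tfrac{2}{n})\,\bigl|\nabla|\mathring{A}|\bigr|^2$ on the trace-free second fundamental form to reach $n\ge 5$.
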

In this paper we continue to study gap theorems for complete submanifolds with parallel mean curvature vector field in the hyperbolic space.  We have the following slight improvement of Theorem 1.5.
\begin{thm}\label{th:1.3}
Let $M^n$ be an $n(n\ge 5)$-dimensional complete noncompact submanifold with parallel mean curvature vector field in the hyperbolic space $\mathbb{H}^{n+m}$, whose trace free second fundamental form $Å$ satisfies 
\begin{eqnarray}\label{eq:1.1}
\limsup_{R\to\infty}\frac{\int_{B_R(q)}|Å|^2\text{d}v}{R^2}=0.
\end{eqnarray}
Then there exists a constant $C_6(n)$ depending only on $n$ such that if 
\begin{eqnarray}\label{eq:1.5}
\int_M|A|^{n}\text{d}v < C_6(n),
\end{eqnarray}
then $M$ is totally geodesic. 
\end{thm}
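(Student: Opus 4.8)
The plan is to carry out a Simons-type Bochner argument for the trace-free second fundamental form $\mathring{A}$, and to use Lin's lower bound for the first eigenvalue to dispose of the negative term coming from the ambient curvature $-1$, which is the feature that distinguishes the hyperbolic case from the spherical and Euclidean ones.

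I would begin by recording the Simons inequality. Because the mean curvature vector is parallel, $|H|$ is a constant and $|A|^2=|\mathring{A}|^2+n|H|^2$; testing the pointwise identity for $\tfrac12\Delta|\mathring{A}|^2$ and discarding a nonnegative gradient remainder by means of the refined Kato inequality $|\nabla\mathring{A}|^2\ge(1+\kappa)|\nabla|\mathring{A}||^2$ with $\kappa=\kappa(n,m)>0$ yields an inequality of the form
\[
|\mathring{A}|\,\Delta|\mathring{A}|\ \ge\ \kappa\,\big|\nabla|\mathring{A}|\big|^2-a(n,m)\,|\mathring{A}|^4-b(n)\,|H|\,|\mathring{A}|^3-n\,|\mathring{A}|^2.
\]
Here the last term, with its fixed coefficient $n$, is the extra bad term produced by the ambient curvature $-1$; unlike the quartic and cubic terms it cannot be made small by the pinching hypothesis \eqref{eq:1.5}, and controlling it is the heart of the matter.

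Next I would set $f=|\mathring{A}|$, choose a cutoff $\varphi$ with $\varphi\equiv1$ on $B_R(q)$, $\mathrm{supp}\,\varphi\subset B_{2R}(q)$ and $|\nabla\varphi|\le C/R$, multiply the Simons inequality by $\varphi^2$ and integrate by parts to obtain
\[
(\kappa+1)\int_M\varphi^2|\nabla f|^2\ \le\ n\int_M f^2\varphi^2+a\int_M f^4\varphi^2+b\,|H|\int_M f^3\varphi^2+2\int_M f\varphi\,\langle\nabla f,\nabla\varphi\rangle .
\]
The decisive step is to absorb the term $n\int_M f^2\varphi^2$: applying Lin's estimate, which supplies an explicit lower bound $\lambda_1(M)\ge\Lambda(n)$ (valid once $|H|$ is small, a smallness that the pinching forces through the monotonicity formula), the Poincar\'e inequality $\Lambda\int_M(f\varphi)^2\le\int_M|\nabla(f\varphi)|^2$ converts this term into $\tfrac{n}{\Lambda}\int_M|\nabla(f\varphi)|^2$, whose leading part $\tfrac{n}{\Lambda}\int_M\varphi^2|\nabla f|^2$ can be moved to the left. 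This leaves a coefficient $\kappa+1-\tfrac{n}{\Lambda}$ in front of $\int_M\varphi^2|\nabla f|^2$, and the whole scheme succeeds precisely when this coefficient is positive; it is Lin's sharper value of $\Lambda$ that makes $\kappa+1>\tfrac{n}{\Lambda}$ hold for every $n\ge5$ rather than only for large $n$, which is the source of the improvement over the theorem of Lin--Wang \cite{LW}.

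It then remains to dominate the higher-order terms. By H\"older's inequality together with the Michael--Simon (Hoffman--Spruck) Sobolev inequality on $M$ one estimates, for instance, $\int_M f^4\varphi^2\le C_S\big(\int_M|A|^n\big)^{2/n}\int_M|\nabla(f\varphi)|^2$, and similarly for the cubic term after a Young inequality; since $\int_M|\mathring{A}|^n\le\int_M|A|^n<C_6(n)$, choosing $C_6(n)$ small makes these contributions a small multiple of $\int_M\varphi^2|\nabla f|^2$, again absorbed on the left. Every remaining term carries a factor $\nabla\varphi$ and is bounded by $\tfrac{C}{R^2}\int_{B_{2R}(q)}f^2$, which tends to $0$ as $R\to\infty$ by the growth hypothesis \eqref{eq:1.1}. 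Hence $\nabla|\mathring{A}|\equiv0$, and a second appeal to \eqref{eq:1.1} forces $|\mathring{A}|\equiv0$, so that $M$ is totally umbilical. Finally, since then $|A|^2=n|H|^2$ is constant, were $|H|>0$ the finiteness of $\int_M|A|^n$ would force $M$ to have finite volume, contradicting the fact that a complete noncompact totally umbilical submanifold of $\mathbb{H}^{n+m}$ has infinite volume; thus $|H|=0$ and $A\equiv0$, i.e.\ $M$ is totally geodesic. The main obstacle throughout is the bad term $-n|\mathring{A}|^2$: making it harmless requires both the correct refined Kato constant and Lin's sharp eigenvalue bound, and it is their balance that pins down the admissible dimensions.
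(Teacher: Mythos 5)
Your proposal follows essentially the same route as the paper's proof: the Simons inequality for $\mathring{A}$ combined with the refined Kato inequality of Lemma \ref{lem:2.5} (where $\kappa=\tfrac2n$), a cutoff function, H\"older plus the Hoffman--Spruck Sobolev inequality (Lemmas \ref{lem:2.2}, \ref{lem:2.3}) to absorb the quartic and cubic terms under the $L^n$-pinching, and Lin's eigenvalue bound (Lemma \ref{lem:2.4}) to absorb the dimension-fixed bad term $n\int\varphi^2|\mathring{A}|^2$; you also correctly identify that the balance $1+\tfrac2n>\tfrac{4n}{(n-1)^2}$ is what produces the restriction $n\ge5$ and the improvement over Lin--Wang. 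Your closing argument (umbilical, noncompact, and $\int_M|A|^n<\infty$ force $H=0$) is in fact more explicit than the paper's one-line assertion that noncompactness yields totally geodesic.

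The one step you leave unjustified is the passage from $\nabla|\mathring{A}|\equiv0$ to $|\mathring{A}|\equiv0$: ``a second appeal to \eqref{eq:1.1}'' does not by itself force vanishing. If $|\mathring{A}|=d\neq0$ is constant, \eqref{eq:1.1} only yields $\limsup_{R\to\infty}\mathrm{Vol}(B_R(q))/R^2=0$, i.e.\ subquadratic volume growth, and one must still rule this out. The paper does so by quoting Cheng--Yau \cite{CY1}: subquadratic volume growth forces $\lambda_1(M)=0$, which contradicts the strict positivity in \eqref{eq:2.7}. Alternatively, the same device as in your final step works: $d\neq0$ gives $\int_M|A|^n\ge d^n\,\mathrm{Vol}(M)$, hence finite volume, and finite volume again forces $\lambda_1(M)=0$ (test the Rayleigh quotient with cutoffs), contradicting Lin's bound. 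Either way, an ingredient beyond \eqref{eq:1.1} alone --- the incompatibility of small volume growth with $\lambda_1(M)>0$ --- is needed, and your sketch should name it.
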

%Note that the above theorem generalizes Theorem 1.2 from case of complete minimal submanifolds to case of complete submanifolds with parallel mean curvature vector field. 
By adapting an eigenvalue estimate for the first eigenvalue of complete noncompact submanifolds in the hyperbolic space due to Lin \cite{Lin} and modifying arguments of Xu-Xu \cite{XX} used in the proof of Theorem \ref{XX's thm}, we have
\begin{thm}\label{main thm2}
Let $M$ be an $n(\geq 5)$-dimensional complete noncompact submanifold with parallel mean curvature in the hyperbolic space $\mathbb{H}^{n+m}$, whose trace free second fundamental form $Å$ satisfies 
$$\limsup_{R\to\infty}\frac{\int_{B_R(q)}|Å|^2\text{d}v}{R^2}=0.$$
Then there exist sufficiently small constants $C_7(n), C_8(n)$ depending only on $n$ such that if
$$(\int_M|H|^n\text{d}v)^\frac{1}{n}\leq C_7(n)$$ and
$$(\int_M|Å|^n \text{d}v)^\frac{2}{n}+\frac{2n(n-2)}{3\sqrt{n(n-1)}}H(\int_M|Å|^\frac{n}{2} \text{d}v)^\frac{2}{n}\leq C_8(n),$$
then $M$ it totally geodesic.
\end{thm}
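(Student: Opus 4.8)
The plan is to run the Simons--Kato--Sobolev scheme for the trace-free second fundamental form $Å$, the one essentially new ingredient being Lin's eigenvalue estimate, which is forced upon us by a zeroth-order bad term coming from the negative ambient curvature. I would start from the refined Simons' formula for a submanifold with parallel mean curvature in $\mathbb{H}^{n+m}$; because the ambient curvature is $-1$ it has the shape
\[
\tfrac12\Delta|Å|^2 = |\nabla A|^2 - n|Å|^2 + Q(Å,H),
\]
where $Q$ gathers the quadratic and cubic interaction terms. The term $-n|Å|^2$ is precisely the new obstruction: it is absent in the Euclidean case and of the wrong sign compared with the spherical case. Bounding $|Q|$ pointwise by $c(n)(|Å|^2+H|Å|)|Å|^2$, and using that $\nabla A=\nabla Å$ (since $H$ is parallel) together with the refined Kato inequality $|\nabla A|^2\ge(1+\tfrac2n)|\nabla|Å||^2$, one arrives at
\[
|Å|\,\Delta|Å| \ge \tfrac2n|\nabla|Å||^2 - a(n)|Å|^4 - b(n)H|Å|^3 - n|Å|^2 .
\]

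Next I would test this against $\phi^2$, with $\phi$ a cutoff equal to $1$ on $B_R(q)$, supported in $B_{2R}(q)$ and $|\nabla\phi|\le C/R$, and integrate by parts over $M$. The integration by parts of $\int_M\phi^2|Å|\Delta|Å|$ contributes an extra $-\int_M\phi^2|\nabla|Å||^2$, so the total good coefficient on the gradient term is $1+\tfrac2n$; the growth hypothesis $\limsup_R R^{-2}\int_{B_R}|Å|^2=0$ kills the cutoff cross terms as $R\to\infty$. In the limit one obtains
\[
\big(\tfrac{n+2}{n}\big)\!\int_M|\nabla|Å||^2 \le n\!\int_M|Å|^2 + a(n)\!\int_M|Å|^4 + b(n)H\!\int_M|Å|^3 .
\]
The quartic and cubic terms are handled as in Xu--Xu: H\"older's inequality bounds them by $(\int_M|Å|^n)^{2/n}$ and $H(\int_M|Å|^{n/2})^{2/n}$ times $(\int_M|Å|^{2n/(n-2)})^{(n-2)/n}$, and the Hoffman--Spruck Sobolev inequality estimates the latter factor by $\int_M|\nabla|Å||^2$ up to a controlled multiple of $H^2\int_M|Å|^2$. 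This is exactly where the smallness of the combined quantity by $C_8(n)$ enters, turning these terms into a small fraction $\varepsilon(C_8)\int_M|\nabla|Å||^2$ of the gradient term.

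The crux is the surviving bad term $n\int_M|Å|^2$, which is only quadratic in $|Å|$ and cannot be touched by $L^n$-pinching. Here I would invoke Lin's lower bound for the bottom $\lambda_1$ of the $L^2$-spectrum of a complete noncompact submanifold of $\mathbb{H}^{n+m}$, which under a smallness assumption on the mean curvature yields the Poincar\'e inequality $\lambda_1\int_M f^2\le\int_M|\nabla f|^2$ with $\lambda_1=\lambda_1(n)$ close to the minimal-submanifold value $(n-1)^2/4$; the hypothesis $(\int_M|H|^n)^{1/n}\le C_7(n)$ is exactly what keeps Lin's estimate applicable and $\lambda_1$ large. Applying it to $f=|Å|$ replaces $n\int_M|Å|^2$ by $\tfrac{n}{\lambda_1}\int_M|\nabla|Å||^2$, giving
\[
\Big(\tfrac{n+2}{n}-\tfrac{n}{\lambda_1}-\varepsilon(C_8)\Big)\int_M|\nabla|Å||^2\le0 .
\]
The coefficient is positive once $\lambda_1>\tfrac{n^2}{n+2}$, and the elementary inequality $(n-1)^2/4>n^2/(n+2)$ holds exactly for $n\ge5$; choosing $C_7(n)$ and $C_8(n)$ small makes the mean-curvature correction to $\lambda_1$ and the Sobolev error $\varepsilon(C_8)$ harmless. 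Thus $\nabla|Å|\equiv0$, and re-inserting this into the Poincar\'e inequality forces $|Å|\equiv0$, so $M$ is totally umbilical.

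It remains to upgrade umbilicity to total geodesy. Once $Å=0$, the parallel mean curvature condition makes $H$ constant and, by the classification behind Theorem \ref{XX's thm}, $M$ is one of the standard complete totally umbilical submanifolds: $\mathbb{S}^n$ if $H^2>1$, $\mathbb{R}^n$ if $H^2=1$, or $\mathbb{H}^n(H^2-1)$ if $H^2<1$. Noncompactness rules out the sphere, and the remaining two models have infinite volume, so $\int_M|H|^n\,\text{d}v = H^n\,\mathrm{Vol}(M)$ is finite only when $H=0$; hence $A=0$ and $M$ is totally geodesic. I expect the real difficulty to lie in the constant-matching of the previous step: one must confirm that, after the mean-curvature correction coming from $C_7(n)$, Lin's $\lambda_1$ still exceeds $n^2/(n+2)$ for every $n\ge5$, and it is this inequality, not the Sobolev step, that pins down both the dimension threshold and the admissible sizes of $C_7(n)$ and $C_8(n)$.
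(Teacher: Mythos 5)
Your strategy coincides with the paper's own proof: Simons' formula with the bad $-n|\mathring{A}|^2$ term, the refined Kato inequality (Lemma \ref{lem:2.5}), the Li--Li and Santos bounds on the quartic and cubic interaction terms, H\"older plus the Hoffman--Spruck-type Sobolev inequality (Lemma \ref{lem:2.3}, which is exactly where the smallness $C_7(n)$ of $\|H\|_n$ enters), Lin's eigenvalue bound (Lemma \ref{lem:2.4}) against the quadratic term, and the dimension count $(n-1)^2/4>n^2/(n+2)$, which holds precisely for $n\ge5$. Your endgame (the noncompact umbilical models $\mathbb{R}^n$ and $\mathbb{H}^n(H^2-1)$ have infinite volume, so finiteness of $\int_M|H|^n\,\text{d}v$ forces $H=0$) is in fact more explicit than the paper's one-line assertion that noncompactness suffices.

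The gap is in the order of operations. You send $R\to\infty$ \emph{first}, arriving at the global inequality
\begin{equation*}
\tfrac{n+2}{n}\int_M|\nabla|\mathring{A}||^2\,\text{d}v\le n\int_M|\mathring{A}|^2\,\text{d}v+\cdots,
\end{equation*}
and only afterwards apply Lin's Poincar\'e inequality to $f=|\mathring{A}|$. But nothing in the hypotheses gives $\int_M|\mathring{A}|^2\,\text{d}v<\infty$: the growth condition only says $\int_{B_R}|\mathring{A}|^2\,\text{d}v=o(R^2)$. So your global inequality may read $\infty\le\infty$, the function $|\mathring{A}|$ need not belong to $\mathring{H}_{1}^{2}(M)$ (the only class in which $\lambda_1\int_M f^2\le\int_M|\nabla f|^2$ is known to hold), and the absorption step producing $(\tfrac{n+2}{n}-\tfrac{n}{\lambda_1}-\varepsilon)\int_M|\nabla|\mathring{A}||^2\,\text{d}v\le0$ subtracts possibly infinite quantities from both sides. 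The paper's proof is structured precisely to avoid this: Lemma \ref{lem:2.3} and Lemma \ref{lem:2.4} are both applied to the \emph{compactly supported} function $\phi|\mathring{A}|$, every gradient term is absorbed while $R$ is still finite, and what survives on the right-hand side is only $\rho\int_M|\nabla\phi|^2|\mathring{A}|^2\,\text{d}v\le C\rho R^{-2}\int_{B_{2R}}|\mathring{A}|^2\,\text{d}v$, which the growth hypothesis kills as $R\to\infty$. The same defect recurs at your final step: once $|\mathring{A}|\equiv d$ is constant you cannot ``re-insert it into the Poincar\'e inequality'', since a nonzero constant is not an admissible test function on a noncompact $M$; the paper instead notes that $d\ne0$ together with the growth condition forces subquadratic volume growth, whence $\lambda_1(M)=0$ by Cheng--Yau, contradicting Lemma \ref{lem:2.4}. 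Everything you wrote is repairable by this standard reordering, but as written both applications of the Poincar\'e inequality are unjustified.
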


Inspired by the study of gap theorems for complete submanifolds with parallel mean curvature vector field, Li-Xu-Zhou \cite{LXZ} studied complete hypersurfaces in a Euclidean space with zero scalar curvature and they obtained the following gap theorem.
 \begin{theorem}[\cite{LXZ}]\label{LXZ}
Let $M^n(n \geq 3)$ be a locally conformally flat complete hypersurface in $\mathbb{R}^{n+1}$ with zero scalar curvature. Then there exists a positive constant $C_9(n)$ depending only on $n$ such that $M$ is a hyperplane if  $$\int_{M}|H|^{n}\text{d}v<C_9(n).$$
 \end{theorem}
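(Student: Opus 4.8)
The plan is to deduce from the smallness of $\int_M|H|^n$ that the trace-free second fundamental form $\mathring A=A-H\,g$ vanishes identically (here $H=\tfrac1n\,\mathrm{tr}\,A$ is the normalized mean curvature), and then observe that a totally umbilical hypersurface of $\mathbb{R}^{n+1}$ with zero scalar curvature can only be a hyperplane. First I would carry out the algebraic reductions forced by the two hypotheses. Since $M$ is a hypersurface of $\mathbb{R}^{n+1}$, the Gauss equation gives the scalar curvature $R=(nH)^2-|A|^2$, so $R=0$ is equivalent to $|A|^2=n^2H^2$, whence
\begin{eqnarray*}
|\mathring A|^2=|A|^2-nH^2=n(n-1)H^2 ,\qquad |\nabla|\mathring A||^2=n(n-1)|\nabla H|^2 .
\end{eqnarray*}
In particular $\int_M|\mathring A|^n=(n(n-1))^{n/2}\int_M|H|^n$ is small, and $M$ is totally geodesic precisely when $H\equiv0$, i.e. when $\mathring A\equiv0$. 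Moreover, for $n\ge4$ local conformal flatness is equivalent to $M$ being quasi-umbilical, so at each point the principal curvatures are $\lambda$ with multiplicity $n-1$ and $\mu$ with multiplicity $1$; this makes the Okumura-type bound $|\mathrm{tr}\,\mathring A^3|\le\frac{n-2}{\sqrt{n(n-1)}}|\mathring A|^3$ exact and supplies a Kato-type inequality $|\nabla\mathring A|^2\ge(1+\delta_n)|\nabla|\mathring A||^2$ with $\delta_n>0$. (The borderline case $n=3$, where conformal flatness is governed by the Cotton tensor rather than quasi-umbilicity, I would treat separately toward the same conclusion.)

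Next I would set up a Simons-type inequality for $|\mathring A|$. Starting from Simons' identity $\Delta h_{ij}=(nH)_{,ij}+nH\,h_{ik}h_{kj}-|A|^2h_{ij}$ valid in $\mathbb{R}^{n+1}$, I would compute $\tfrac12\Delta|\mathring A|^2$ and use the zero scalar curvature constraint to tame the extra terms. The identities $|A|^2=n^2H^2$ and $|\nabla|\mathring A||^2=n(n-1)|\nabla H|^2$ let me rewrite the Hessian contribution $\langle A,\nabla^2(nH)\rangle$ as a gradient term after one integration by parts, while $|H|=|\mathring A|/\sqrt{n(n-1)}$ together with the sharp Okumura bound turns every cubic/mean-curvature term into a multiple of $|\mathring A|^4$. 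This should yield a clean differential inequality of the form
\begin{eqnarray*}
\tfrac12\Delta|\mathring A|^2\ \ge\ |\nabla\mathring A|^2-C(n)\,|\mathring A|^4 .
\end{eqnarray*}

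I would then run the integral estimate, testing this inequality against $\eta^2|\mathring A|^{n-2}$, where $\eta$ is a cutoff equal to $1$ on $B_R(q)$, supported in $B_{2R}(q)$, with $|\nabla\eta|\le 2/R$. Integrating by parts and using the Kato inequality, the key point is that the resulting boundary error is controlled by
\begin{eqnarray*}
\frac{1}{\varepsilon}\int_M|\mathring A|^{n}\,|\nabla\eta|^2\ \le\ \frac{C}{\varepsilon R^2}\int_{B_{2R}(q)\setminus B_R(q)}|\mathring A|^{n}\ \longrightarrow\ 0
\end{eqnarray*}
as $R\to\infty$, purely because $\int_M|\mathring A|^n<\infty$; this is exactly why no volume-growth hypothesis is needed here. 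The remaining term $C(n)\int\eta^2|\mathring A|^{n+2}$ I would estimate by the Michael–Simon Sobolev inequality for submanifolds of Euclidean space, $\big(\int_M u^{2n/(n-2)}\big)^{(n-2)/n}\le C(n)\int_M(|\nabla u|^2+|H|^2u^2)$ applied to $u=\eta|\mathring A|^{n/2}$, followed by Hölder's inequality, giving a bound by $C(n)\,\|\mathring A\|_{L^n(B_{2R})}^2$ times $\int\eta^2|\mathring A|^{n-2}|\nabla|\mathring A||^2$ plus error terms of the same vanishing type; note $|H|^2u^2$ reproduces the quartic term and is reabsorbed.

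Choosing $C_9(n)$ small makes $\|\mathring A\|_{L^n}^2$ small enough to absorb this term into the left-hand gradient energy, so that after letting $R\to\infty$ one obtains $\int_M|\mathring A|^{n-2}|\nabla|\mathring A||^2=0$, i.e. $\nabla|\mathring A|^{n/2}\equiv0$; since $M$ is complete and noncompact while $|\mathring A|^{n/2}\in L^2(M)$, the constant must be zero, so $\mathring A\equiv0$, hence $H\equiv0$ and $M$ is a hyperplane. I expect the main obstacle to be the same feature stressed for the hyperbolic analogue in this paper, namely controlling the extra terms in Simons' formula—here the $\langle A,\nabla^2(nH)\rangle$ Hessian term and the cubic term—by systematically converting them through the zero scalar curvature constraint and the quasi-umbilical structure so that every constant closes with a dimensional threshold $C_9(n)$; once that is in hand, the choice of the weight $|\mathring A|^{n-2}$ is what lets the cutoff argument converge without any assumption on volume growth.
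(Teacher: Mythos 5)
This statement is Theorem~\ref{LXZ}, which the paper merely quotes from \cite{LXZ} as background; the paper contains no proof of it. The only meaningful comparison is therefore with the method this paper (following Bai--Luo \cite{BL2}) uses for the analogous results, Theorems~1.9--1.11: the Cheng--Yau operator $\Box f=(nH\delta_{ij}-h_{ij})f_{ij}$ built from the divergence-free Newton tensor $P_1=nHg-A$, tested against $\psi^2 f^{2k-2}$, with the Sobolev inequality absorbing the quartic term.

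Your outline has the right global architecture (weight $\eta^2|\mathring A|^{n-2}$, Michael--Simon Sobolev inequality, absorption by smallness of $\|H\|_{L^n}$, no volume-growth hypothesis), but it has a genuine gap at its central step. The pointwise inequality $\tfrac12\Delta|\mathring A|^2\ge|\nabla\mathring A|^2-C(n)|\mathring A|^4$ is not available here: since $H$ is not constant, Simons' identity carries the second-order term $\langle A,\nabla^2(nH)\rangle$, which cannot be converted pointwise into gradient and quartic terms. If you instead handle it under the integral sign, integrating by parts with Codazzi ($h_{ij,j}=(nH)_{,i}$) against the weight $w=\eta^2|\mathring A|^{n-2}$ produces, besides $-n^2\int w|\nabla H|^2$ (which is compensated by $|\nabla A|^2\ge n^2|\nabla H|^2$, the Euclidean analogue of Lemma~\ref{lem:2.1}), the cross term $-n\int h_{ij}H_{,i}w_{,j}$. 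Because $R=0$ forces $|\mathring A|$ and $|H|$ to be proportional, the part of this term coming from $\nabla(|\mathring A|^{n-2})$ combines with the previous one into exactly $c(n)\int\eta^2|H|^{n-3}(-P_1)(\nabla H,\nabla|H|)$, i.e.\ precisely the quantity that condition (\ref{eq:1.3}) of Theorems~1.9--1.11 is designed to control. This term is of the \emph{same order} as your gradient energy $\int\eta^2|\mathring A|^{n-2}|\nabla|\mathring A||^2$, carries no factor of $\|\mathring A\|_{L^n}$ or $\|H\|_{L^n}$, and has no a priori sign; it therefore cannot be absorbed by choosing $C_9(n)$ small. Giving it a sign is the crux of the theorem, and it is exactly where conformal flatness must be used quantitatively: quasi-umbilicity together with $R=0$ forces (away from the rank-one branch) the principal curvatures $(2H,\dots,2H,-(n-2)H)$, hence $P_1(\nabla H,\nabla|H|)=|H|\,Q(\nabla H,\nabla H)$ with $Q\ge(n-2)g$, which is the ellipticity hypothesis of \cite{BL2} with $\delta=n-2$. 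Your proposal never identifies this term or carries out this bookkeeping, so the absorption step is unsupported; note that in the hyperbolic analogue (Theorem~\ref{main theorem}) the paper even needs $\delta>\frac{n^3}{(n-1)(n-2)}$, which shows the closing of constants is not automatic.

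Two further gaps: the case $n=3$ is part of the statement but is deferred with no argument, and for $n=3$ conformal flatness (a Cotton-tensor condition) is \emph{not} equivalent to quasi-umbilicity, so your structural input disappears exactly there; and the final step ``$|\mathring A|$ constant with $|\mathring A|^{n/2}\in L^2$ implies the constant is zero'' requires $\mathrm{Vol}(M)=\infty$, which should be derived (e.g.\ from the Sobolev inequality on a complete noncompact manifold) rather than assumed.
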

By the Gauss equation,  a hypersurface in $\mathbb{R}^{n+1}$ has zero scalar curvature is equivalent to the second elementary symmetric function of its principle curvatures vanishes. Gap theorem for hypersurfaces with vanishing  second elementary symmetric function of  principle curvatures (i.e. with constant scalar curvature $n(n-1)$) in the unit sphere was obtined by Bai-Luo \cite{BL1}. Without assuming conformal flatness, instead by imposing an "elliptic" condition on the first order Newton transformation, Bai-Luo \cite{BL2} obtained 
\begin{theorem} [\cite{BL2}]
Let $(M^n, g)(n\geq3)$ be a complete  hypersurface immersed in $\mathbb{R}^{n+1}$ with zero scalar curvature. Assume that $P_1=nHg-A$ and it satisfies that
 \begin{eqnarray}
 -P_1(\nabla H,\nabla|H|)\leq-\delta|H||\nabla H|^2
 \end{eqnarray}
 for some positive constant $\delta$. Then there exists a sufficiently small number $C_{10}(n,\delta)$  depending only on dimension $n$ and $\delta$, such that if
\begin{eqnarray}
(\int_M|H|^ndv)^\frac{1}{n}<C_{10}(n,\delta),
\end{eqnarray}
then $M$ is a hyperplane.
\end{theorem}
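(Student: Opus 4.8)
The plan is to reduce the conclusion to the vanishing of the mean curvature and then to run a Simons-type integral estimate in which the hypothesis on $P_1$ plays the role that ellipticity of the Laplacian plays in the minimal case. Since the scalar curvature vanishes, the Gauss equation gives $(nH)^2-|A|^2=2S_2=0$, i.e. $|A|^2=n^2H^2$; hence $M$ is a hyperplane if and only if $H\equiv0$, and every algebraic curvature term occurring below is controlled by a power of $|H|$ alone. I would also record that $P_1=nHg-A$ is divergence free (a consequence of the Codazzi equations in a space form), so that the associated operator $L_1f=\div(P_1\nabla f)=\langle P_1,\nabla^2 f\rangle$ is in divergence form.

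The analytic heart is a Simons-type identity. Starting from Simons' formula for $\tfrac12\Delta|A|^2$ in $\mathbb{R}^{n+1}$, substituting $|A|^2=n^2H^2$ on the left and rewriting the Hessian term $\langle A,\nabla^2(nH)\rangle$ through $A=nHg-P_1$, the second-order terms in $H$ cancel and one is left with
\[
nL_1H=\big(|\nabla A|^2-n^2|\nabla H|^2\big)+nH\,\mathrm{tr}(A^3)-|A|^4 .
\]
By the Kato inequality $|\nabla A|^2\ge|\nabla|A||^2=n^2|\nabla H|^2$ the first bracket is nonnegative, while $|nH\,\mathrm{tr}(A^3)-|A|^4|\le c(n)|H|^4$ because $|A|=n|H|$.

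Next I would test this identity against $|H|^{n-2}\varphi^2$ with $\varphi$ a cutoff and integrate by parts using $\div P_1=0$. Differentiating the weight produces $\nabla|H|=\mathrm{sgn}(H)\nabla H$, so the principal term is a positive multiple of $\int_M|H|^{n-3}\varphi^2\,\mathrm{sgn}(H)\,P_1(\nabla H,\nabla H)$; here the hypothesis $-P_1(\nabla H,\nabla|H|)\le-\delta|H||\nabla H|^2$—equivalently $\mathrm{sgn}(H)\,P_1(\nabla H,\nabla H)\ge\delta|H||\nabla H|^2$—bounds it below by $\delta$ times $\int_M|H|^{n-2}\varphi^2|\nabla H|^2$. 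This is the crucial and, I expect, the most delicate step: since $P_1$ is in general indefinite, $L_1$ need not be elliptic, and it is precisely the assumed "elliptic" condition that restores the positive gradient term one obtains for free from $\Delta$ in the minimal/constant-mean-curvature setting. Collecting terms, and absorbing the cutoff cross-term via Cauchy--Schwarz together with $|P_1|\le 2n|H|$, yields
\[
c\,\delta\int_M|H|^{n-2}\varphi^2|\nabla H|^2+\int_M|H|^{n-2}\varphi^2\big(|\nabla A|^2-n^2|\nabla H|^2\big)\le c\int_M|H|^{n+2}\varphi^2+c\int_M|H|^{n}|\nabla\varphi|^2 .
\]

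Finally I would close the estimate with the Michael--Simon Sobolev inequality applied to $f=|H|^{n/2}\varphi$, writing $\int|H|^{n+2}\varphi^2=\int|H|^2f^2$ and using Hölder to extract the factor $\big(\int_M|H|^n\big)^{2/n}$. Choosing $C_{10}(n,\delta)$ so small that this factor times the Sobolev constant is less than $\tfrac{c\delta}{2}$ lets me absorb the resulting term, leaving $\int_M|H|^{n-2}\varphi^2|\nabla H|^2\le c(n,\delta)\int_M|H|^n|\nabla\varphi|^2$. The exponent $n$ is exactly what makes the weight match: taking standard cutoffs with $|\nabla\varphi|\le c/R$ on $B_{2R}(q)$, the right-hand side is at most $\tfrac{c}{R^2}\int_M|H|^n\to0$ since $\int_M|H|^n<\infty$, so no auxiliary growth condition is needed. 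Letting $R\to\infty$ forces $\nabla H\equiv0$, whence $|H|$ is constant and, the nonnegative bracket being forced to vanish as well, $\nabla A\equiv0$ on $\{H\neq0\}$. If this constant were positive, $M$ would be a complete hypersurface with parallel second fundamental form and $S_2=0$, i.e. a generalized cylinder $\mathbb{R}^{n-1}\times\mathbb{S}^1$, which has infinite volume and hence $\int_M|H|^n=\infty$, a contradiction. Therefore $H\equiv0$, so $A\equiv0$ and $M$ is a hyperplane.
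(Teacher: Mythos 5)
Your proposal is correct, and its analytic core is exactly the method this paper uses in Section~4 for the hyperbolic analogue (Theorem~\ref{main theorem}), which is the Bai--Luo scheme: the divergence-free first Newton transformation $P_1$ and the Cheng--Yau operator, the identity $n\Box H=|\nabla A|^2-n^2|\nabla H|^2+nH\,\mathrm{tr}A^3-|A|^4$ (the Euclidean version of (\ref{eq:2.3})), weighted integration by parts against $|H|^{n-2}\varphi^2$ (the paper's $\psi^2f^{2k-2}$ with $k=n/2$) so that the ellipticity hypothesis on $P_1$ supplies the positive gradient term, then Hoffman--Spruck Sobolev plus smallness of $\|H\|_n$ to absorb the quartic term, and cutoffs with $R\to\infty$. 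Where you genuinely diverge is the endgame. The paper keeps Lemma~\ref{lem:2.1} only to discard the Kato-type bracket early (arriving at (\ref{eq:2.5})), and after showing $|H|$ is constant it rules out a nonzero constant via Lin's first-eigenvalue estimate (Lemma~\ref{lem:2.4}) --- a tool tied to the hyperbolic ambient space and unavailable in $\mathbb{R}^{n+1}$; the natural Euclidean substitute is that a nonzero constant $|H|$ with $\int_M|H|^n<\infty$ forces $\mathrm{Vol}(M)<\infty$, contradicting the Sobolev inequality of Lemma~\ref{lem:2.3} on a complete noncompact manifold. You instead retain the bracket $|\nabla A|^2-n^2|\nabla H|^2$, conclude $\nabla A\equiv0$ when $H$ is a nonzero constant, and invoke the classification of complete hypersurfaces with parallel second fundamental form ($M=\mathbb{R}^{n-1}\times\mathbb{S}^1$, infinite volume). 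This is valid and self-contained given Lawson's classification, though it is heavier machinery than the volume/Sobolev argument; conversely, it nicely explains why no auxiliary growth hypothesis is needed, a point the paper's proof also exploits. Two small blemishes, neither serious: $|P_1|=n\sqrt{n-1}\,|H|$ exactly (so your bound $|P_1|\le 2n|H|$ fails for $n\ge6$, but only a dimensional constant is needed), and your appeal to ``the'' Michael--Simon Sobolev constant should be stated as in Lemma~\ref{lem:2.3}, since that constant is only available after the mean-curvature term is absorbed using $D(n)n\|H\|_n<1$, i.e.\ after $C_{10}(n,\delta)$ is taken small.
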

\begin{theorem} [\cite{BL2}]
Let $(M^n, g)(n\geq3)$ be a complete  hypersurface immersed in $\mathbb{S}^{n+1}$ with constant scalar curvature $R=n(n-1)$. Assume that $P_1=nHg-A$ and it satisfies that
 \begin{eqnarray}
 -P_1(\nabla H,\nabla|H|)\leq-\delta|H||\nabla H|^2
 \end{eqnarray}
 for some positive constant $\delta$. Then there exists a sufficiently small number $C_{11}(n,\delta)$  depending only on dimension $n$ and $\delta$, such that if
\begin{eqnarray}
(\int_M|H|^ndv)^\frac{1}{n}<C_{11}(n,\delta),
\end{eqnarray}
then $M$ is totally geodesic.
\end{theorem}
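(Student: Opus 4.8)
\noindent\emph{Proof proposal.} The plan is to convert the scalar-curvature hypothesis into a pointwise identity tying $|A|$ to $H$, and then to run a Simons-type argument based on the Cheng--Yau operator $\square f:=\div(P_1\nabla f)$ attached to the first Newton transformation $P_1=nHg-A$. For a hypersurface in $\mathbb{S}^{n+1}$ the Gauss equation reads $R=n(n-1)+n^2H^2-|A|^2$, so the assumption $R=n(n-1)$ is equivalent to the vanishing of the second elementary symmetric function, that is
\[
|A|^2=n^2H^2,\qquad |\mathring{A}|^2=|A|^2-nH^2=n(n-1)H^2 .
\]
Hence every geometric quantity is governed by $H$ alone, and showing that $M$ is totally geodesic is the same as showing $H\equiv0$. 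Since $\div P_1=0$ in a space form (a consequence of the Codazzi equation), $\square$ is a self-adjoint second order operator, and the hypothesis $-P_1(\nabla H,\nabla|H|)\le-\delta|H||\nabla H|^2$ is precisely the ellipticity that renders $\square$ positive along $\nabla|H|$.

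The analytic core is a Simons-type inequality for $\square$. Starting from the classical Simons formula for $\tfrac12\Delta|A|^2$ in $\mathbb{S}^{n+1}$, I would rewrite the second order term $\langle A,\mathrm{Hess}(nH)\rangle$ through $\square H$ and the constant-$S_2$ constraint, use a Kato inequality to extract $|\nabla H|^2$ from $|\nabla A|^2$, and substitute $|A|^2=n^2H^2$ throughout. The ellipticity hypothesis enters exactly here, to supply a favorable sign for the residual $P_1(\nabla H,\nabla H)$ term. I expect a pointwise inequality, valid on $\{H\neq0\}$, of the schematic form
\[
|H|\,\square|H|\ \ge\ \delta_1(n,\delta)\,|\nabla H|^2\ +\ c_1(n)\,|H|^2\ -\ c_2(n)\,|H|^4 ,
\]
where the positive mass term $c_1(n)|H|^2$ is inherited from the \emph{positive} ambient curvature of the sphere; it is precisely this term that is lost in the Euclidean and hyperbolic settings, which is why those versions require an auxiliary growth or eigenvalue hypothesis.

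Next I would test this inequality against $\eta^2|H|^{2\beta}$, where $\eta$ equals $1$ on $B_R(x_0)$, is supported in $B_{2R}(x_0)$ with $|\nabla\eta|\le C/R$, and $\beta$ is an exponent matched to the Sobolev embedding. Integrating and transferring $\square$ onto the test function produces a Dirichlet-type term in $P_1$ of order $|H|^{2\beta+1}|\nabla H|^2$ (one power of $|H|$ higher than the good gradient term, hence reabsorbable in the small-energy regime), a cutoff error $\int|\nabla\eta|^2|H|^{2\beta+2}$, and the bad quartic term $\int\eta^2|H|^{2\beta+4}$. The quartic term is absorbed by combining Hölder's inequality with the Michael--Simon/Hoffman--Spruck Sobolev inequality on $M\subset\mathbb{S}^{n+1}$: writing $f=|H|^{\beta+1}$, the Sobolev inequality controls $\big(\int_M|f|^{\frac{2n}{n-2}}\big)^{\frac{n-2}{n}}$ by $\int_M|\nabla f|^2$ plus an ambient mean-curvature contribution, and Hölder against $\big(\int_M|H|^n\big)^{2/n}$ turns $\int|H|^{2\beta+4}$ into (small constant)$\times$(good terms). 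Choosing $C_{11}(n,\delta)$ so that this small constant stays below the coefficients $\delta_1,c_1$ lets the bad term be absorbed; letting $R\to\infty$, finiteness of $\int_M|H|^n$ kills the cutoff error, leaving $\int_M(\delta_1|\nabla H|^2+c_1|H|^2)\le0$. Therefore $H\equiv0$, and by $|A|^2=n^2H^2$ also $A\equiv0$, so $M$ is totally geodesic.

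I expect the principal difficulty to be the derivation and sign-bookkeeping of the $\square$-Simons inequality: correctly rewriting $\langle A,\mathrm{Hess}(nH)\rangle$ via $\square H$, isolating the positive curvature term $c_1(n)|H|^2$, and, crucially, using the ellipticity condition to pin down a strictly positive coefficient $\delta_1(n,\delta)$ of $|\nabla H|^2$ after the Kato inequality, since without a controlled sign the residual $P_1$-gradient term competes with the good gradient term. A secondary, specifically spherical, technical point is the Sobolev step: the mean curvature of $M$ in $\mathbb{R}^{n+2}$ equals $\sqrt{H^2+1}$, and the extra ``$+1$'' produces lower order terms in $H$ that must be re-absorbed, which is what forces $C_{11}$ to depend on both $n$ and $\delta$ while remaining explicit.
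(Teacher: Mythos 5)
Your proposal is correct and follows essentially the same route as the source of this statement \cite{BL2}, which is also the scheme this paper adapts in Section 4 to prove the hyperbolic analogue (Theorem \ref{main theorem}): the Gauss equation forces $|A|^2=n^2H^2$, the Cheng--Yau operator $\Box$ with $\div P_1=0$ together with the Kato-type estimate of Lemma \ref{lem:2.1} gives the pointwise bound $\Box H\ge n(n-1)H^2-2n^3H^4$ (the spherical analogue of (\ref{eq:2.5}), with the mass term now positive), and testing against $\psi^2|H|^{n-2}$, applying H\"older plus the Sobolev inequality, and using smallness of $(\int_M|H|^n\text{d}v)^{1/n}$ absorbs the quartic term; your observations that the positive mass term is exactly what removes Lin's eigenvalue estimate and the threshold $\delta>\frac{n^3}{(n-1)(n-2)}$ needed in the hyperbolic case, and that the spherical Sobolev inequality carries an extra lower-order term coming from $\sqrt{H^2+1}$, are both on target.

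One correction to your bookkeeping: the ellipticity hypothesis cannot supply a $\delta$-dependent gradient term in the \emph{pointwise} Simons-type inequality, because the formula for $\Box H$ contains $\frac1n|\nabla A|^2-n|\nabla H|^2$ and no $P_1(\nabla H,\nabla|H|)$ term (Lemma \ref{lem:2.1} yields only nonnegativity of that difference); the hypothesis enters exclusively after integration by parts, where the $P_1$-Dirichlet term $(n-2)\int_M\psi^2|H|^{n-3}(-P_1)(\nabla H,\nabla|H|)\,\text{d}v\le-(n-2)\delta\int_M\psi^2|H|^{n-2}|\nabla H|^2\,\text{d}v$ is \emph{not} an error term ``reabsorbable in the small-energy regime'' --- it carries the weight $|H|^{n-2}|\nabla H|^2$, the same order as the good gradient term rather than one power higher, so smallness of $\|H\|_{L^n}$ cannot control it --- but is itself the good gradient term against which the Sobolev-converted quartic term must be absorbed, and without using its sign the argument does not close.
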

The second aim of this paper is to extend the last two theorems to case of the hyperbolic space. We have
\begin{thm}\label{main theorem}
Let $M^n$ be an $n(n\ge 3)$-dimensional complete  hypersurface in the hyperbolic space $\mathbb{H}^{n+1}$ with constant scalar curvature $R=n(1-n)$. Assume that $P_1=nHg-A$ and it satisfies that 
\begin{eqnarray}\label{eq:1.3}
-P_1(\nabla H,\nabla |H|)\le-\delta |H||\nabla H|^2
\end{eqnarray}
for some positive constant $\delta > \frac{n^3}{(n-1)(n-2)}$.
 Then there exists a sufficiently small number $C_{12}(n,\delta)$  depending only on $n$ and $\delta$, such that if
\begin{eqnarray}\label{eq:1.4}
(\int_M|H|^n\text{d}v)^{\frac1 n}<C_{12}(n,\delta), 
\end{eqnarray}
then $M$ is totally geodesic.
\end{thm}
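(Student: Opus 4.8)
The plan is to adapt the argument of Bai--Luo \cite{BL2} to the hyperbolic setting, the new difficulty being the extra terms produced by the ambient curvature $-1$, which I would control using Lin's eigenvalue estimate \cite{Lin}. First I would record the algebraic content of the curvature normalization. By the Gauss equation a hypersurface in $\mathbb{H}^{n+1}$ has scalar curvature $R=-n(n-1)+n^2H^2-|A|^2$, so the hypothesis $R=n(1-n)$ is equivalent to $|A|^2=n^2H^2$, that is, to the vanishing of the second elementary symmetric function $\sigma_2$ of the principal curvatures. Writing $\mathring A=A-Hg$ for the trace free second fundamental form, this gives $|\mathring A|^2=n(n-1)H^2$, so that $\int_M|H|^n\,\text{d}v$, $\int_M|\mathring A|^n\,\text{d}v$ and $\int_M|A|^n\,\text{d}v$ are mutually comparable; in particular \eqref{eq:1.4} is a smallness assumption on all three. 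Since $M$ lies in a space form, the Codazzi equation gives $\div P_1=0$, so $L_1f:=\div(P_1\nabla f)=\sum_{ij}(P_1)_{ij}f_{;ij}$ is self-adjoint and, by \eqref{eq:1.3}, elliptic.

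Next I would derive the basic differential inequality. Feeding $\sigma_2=0$ into Simons' formula for the hypersurface and rewriting the result through the Cheng--Yau operator $L_1$ yields, after collecting terms, a pointwise estimate of the schematic form
\begin{equation*}
|H|\,L_1|H|\ \geq\ (\text{good gradient terms})\ -\ c_1(n)\,|H|^4\ -\ c_2(n)\,|H|^2 ,
\end{equation*}
where the quartic pinching contribution is handled through Okumura's inequality $|\mathrm{tr}\,\mathring A^3|\leq\frac{n-2}{\sqrt{n(n-1)}}|\mathring A|^3$ --- this is the source of the factor $n-2$ --- while the genuinely new feature is the zeroth order term $-c_2(n)|H|^2$ with $c_2(n)$ of order $n^2(n-1)$, coming directly from the ambient curvature $-1$. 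This term has no counterpart in $\mathbb{R}^{n+1}$ and carries the sign opposite to the helpful one in $\mathbb{S}^{n+1}$; it is the extra bad term alluded to in the abstract.

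I would then integrate this inequality against $\phi^2|H|^{2\alpha}$, with $\phi$ a cutoff and $\alpha$ a power to be optimized, and integrate by parts using the divergence form of $L_1$. The ellipticity hypothesis \eqref{eq:1.3} turns the resulting pairing into a coercive term bounded below by a positive multiple of $\delta\int_M\phi^2|H|^{2\alpha}|\nabla|H||^2\,\text{d}v$, which simultaneously dominates the gradient cross-terms and supplies a large gradient energy. To absorb $c_2(n)\int_M\phi^2|H|^{2\alpha+2}\,\text{d}v$ I would invoke Lin's lower bound \cite{Lin} for the bottom of the spectrum of a complete submanifold of $\mathbb{H}^{n+m}$: under the smallness \eqref{eq:1.4} the total curvature is small, so $\lambda_1(M)>0$ and one has a global Poincar\'e inequality $\int_M\psi^2\,\text{d}v\leq\lambda_1^{-1}\int_M|\nabla\psi|^2\,\text{d}v$ for compactly supported $\psi$. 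Because this is a spectral inequality valid without any volume-growth assumption, no $\limsup$ hypothesis is needed here, in contrast with Theorems \ref{XX's thm} and \ref{main thm2}. The requirement $\delta>\frac{n^3}{(n-1)(n-2)}$ arises precisely from quantitatively balancing the $\delta$-amplified gradient energy against the bad quadratic term and the gradient cross-terms.

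Finally, I would apply the Hoffman--Spruck Sobolev inequality for submanifolds of a Cartan--Hadamard manifold to $\phi|H|^{\alpha+1}$ and then H\"older's inequality, estimating the remaining nonlinear term $c_1(n)\int_M\phi^2|H|^{2\alpha+4}\,\text{d}v$ by $\big(\int_M|H|^n\,\text{d}v\big)^{2/n}$ times a Sobolev energy; the smallness \eqref{eq:1.4} then lets this term be absorbed into the good gradient term. Letting $\phi\uparrow 1$ and using the finiteness of $\int_M|H|^n\,\text{d}v$ to kill the boundary contributions, I would conclude that the gradient and pinching terms all vanish, whence $\nabla H\equiv 0$ and then $H\equiv 0$; since $|A|^2=n^2H^2$, this forces $A\equiv 0$, so $M$ is totally geodesic. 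The main obstacle is the third step: unlike in $\mathbb{R}^{n+1}$ or $\mathbb{S}^{n+1}$, the hyperbolic Simons formula carries a bad zeroth order term, and matching Lin's eigenvalue bound against it is exactly what forces both the explicit lower bound on $\delta$ and the quantitative smallness \eqref{eq:1.4}.
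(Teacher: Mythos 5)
Your proposal follows essentially the same route as the paper's proof: the Cheng--Yau operator with divergence-free $P_1$ (via Codazzi), the ellipticity hypothesis (\ref{eq:1.3}) for coercivity, the weighted test function $\phi^2|H|^{n-2}$, Hoffman--Spruck plus H\"older to absorb the quartic term using the smallness (\ref{eq:1.4}), Lin's eigenvalue bound to absorb the bad quadratic term coming from the ambient curvature $-1$ (with no volume-growth hypothesis needed, exactly as in the paper), and a cutoff limit giving $\nabla H\equiv 0$, then $H\equiv 0$, then $A\equiv 0$. One minor correction: the paper uses the crude bound $|\mathrm{tr}A^3|\le |A|^3$ rather than Okumura's inequality, and the factor $n-2$ in the threshold $\delta>\frac{n^3}{(n-1)(n-2)}$ actually comes from the weight exponent (the coefficient $\frac{k-1}{k^2}=\frac{2(n-2)}{n^2}$ at $k=\frac n2$) balanced against the eigenvalue-absorbed quadratic term, not from Okumura --- consistent with your own closing remark, and immaterial to the validity of your argument.
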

\vspace{0.1cm}

\textbf{Organization.} In the next section we will give some formulas and lemmas which will be used in  proofs of the main theorems. Theorem 1.6 and Theorem 1.7 will be proved in section 3, while Theorem \ref{main theorem} will be proved in section 4.

\section{Notations and Preliminaries}
\subsection{Basic notations}
Let $M^n$ be a complete submanifolds in the hyperbolic space $\mathbb{H}^{n+m}$ with second fundamental form $A$. We choose a local orthonormal frame $e_1,\cdots ,e_{n+m}$ in $\mathbb{H}^{n+m}$ such that $e_1,\cdots ,e_n$ are tangent to $M^n$. Let $w_1,\cdots ,w_{n+m}$ be the dual coframe. In the following we shall use the following convention on the ranges of indices:
\begin{eqnarray}
1\le A,B,C,\cdots \le n+m;\qquad  1\le i,j,k,l,\cdots \le n; \qquad n+1\le\alpha\le n+m.  \notag
\end{eqnarray}
The structure equations of $\mathbb{H}^{n+m}$ are given by
\begin{eqnarray}
dw_A=-\sum\limits_{B} w_{AB}\wedge w_B ,\qquad  w_{AB}+w_{BA}=0, \notag
\end{eqnarray}
\begin{eqnarray}
dw_{AB}=-\sum\limits_{C} w_{AC}\wedge w_{CB}+\frac 1 2 \sum\limits_{C,D} K_{ABCD}w_{C}\wedge w_{D}, \notag
\end{eqnarray}
where $K_{ABCD}=-(\delta_{AC} \delta_{BD}-\delta_{AD} \delta_{BC})$.
Restricting to $M^n$, we have $w_{\alpha}=0$. 

Since  $$0=dw_{\alpha}=-\sum\limits_{i=1}^n w_{\alpha i} \wedge w_i,$$ by Cartan's Lemma we can write
\begin{eqnarray}
w_{\alpha i}=\sum\limits_{j} h_{ij}^{\alpha}w_{j},\qquad h _{ij}^{\alpha}=h_{ji}^{\alpha}. \notag
\end{eqnarray}
The structure equations of $M^n$ are
\begin{eqnarray}
dw_i=-\sum\limits_{j} w_{ij}\wedge w_j ,\qquad  w_{ij}+w_{ji}=0, \notag
\end{eqnarray}
\begin{eqnarray}
dw_{ij}=-\sum\limits_{k} w_{ik}\wedge w_{kj}+\frac 1 2 \sum\limits_{k,l} R_{ijkl}w_{k}\wedge w_{l}, \notag
\end{eqnarray}
where the following Gauss equation holds
\begin{eqnarray}
R_{ijkl}=-(\delta_{ik} \delta_{jl}-\delta_{il} \delta_{jk})+ \sum\limits_{\alpha}(h_{ik}^{\alpha}h_{jl}^{\alpha}-h_{il}^{\alpha}h_{jk}^{\alpha}).  \label{eq:2.1}
\end{eqnarray}
Denotes by $A$, $S$, $H$ the second fundamental form, squared length of the second fundamental form and the mean curvature of $M$, respectively. We have
\begin{eqnarray*}
A=\sum\limits_{i,j,\alpha}h_{ij}^{\alpha}w_i \otimes w_j\otimes e_{\alpha}, \qquad  h_{ij}^{\alpha}=h_{ji}^{\alpha},
\end{eqnarray*}
\begin{eqnarray*}
S=\sum\limits_{i,j,\alpha}(h_{ij}^{\alpha})^2,\qquad H= \frac1 n \sqrt{ \sum\limits_{\alpha}(\sum\limits_{i}h_{ii}^{\alpha})^2}.%=\frac1 n trB,
\end{eqnarray*}
Assume that $R$ is the scalar curvature of $M$, then by (\ref{eq:2.1}) we have
\begin{eqnarray}
R=n(1-n)+n^2 H^2 -|A|^2.  \label{eq:2.2}
\end{eqnarray}
\subsection{Submanifolds}
Let $H=|\xi|$, and when $\xi \ne 0$, we choose $e_{n+1}$ such that $e_{n+1}$ parallels with $\xi$. Then we have $$trA_{n+1}=nH$$ and $$trA_{\eta}=0,\s  n+2\le \eta \le n+m, $$ where $A_{\alpha}= \left\{ h_{ij}^{\alpha}\right\}_{n\times n}.$  Denote by $Å_\alpha$ the trace free part of $A_\alpha$.

By \cite{XX}, we have the following Simons' formula for submanifolds with parallel mean curvature vector field in $\mathbb{H}^{n+m}$.
\begin{align} \label{eq:2.6}
\frac1 2 \Delta|Å|^2 &=|\nabla Å|^2-n|Å|^2+nH^2|Å|^2+nH\sum\limits_{\alpha}tr(Å_{n+1}Å_{\alpha}^{2})   \notag\\
& -\sum\limits_{\alpha,\eta}tr([Å_{\alpha},Å_{\eta}]^2)-\sum\limits_{\alpha,\eta}[tr(Å_{\alpha}Å_{\eta})]^2  \notag \\
&\ge |\nabla Å|^2-n|Å|^2+nH\sum\limits_{\alpha}tr(Å_{n+1}Å_{\alpha}^{2})  \notag \\
&-\sum\limits_{\alpha,\eta}tr([Å_{\alpha},Å_{\eta}]^2)-\sum\limits_{\alpha,\eta}[tr(Å_{\alpha}Å_{\eta})]^2.
\end{align}
%where $|\nabla A|^2=\sum\limits_{i,j,k,\alpha}(h_{ijk}^{\alpha})^2$.
\begin{lem} [\cite{HS}]\label{lem:2.2}
Assume that $M^n$ is a complete submanifold in $N^{n+m}$ with nonpositive sectional curvatures, then 
$$(\int_M g^{\frac{n}{n-1}}\text{d}v)^\frac{n-1}{n}\le D(n)\int_M(|\nabla g|+n|H|g)\text{d}v,$$
holds for any $g\in C_{0}^{1}(M)$, where $D(n)=\frac {2^n(1+n)^{\frac {n+1} n}}{(n-1)(\sigma_n)^{\frac 1 n}}$ and $\sigma_n$ is volume of the unit ball in $\mathbb{R}^n$.
\end{lem}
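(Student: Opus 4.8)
The plan is to follow the Michael–Simon construction as adapted to Riemannian targets by Hoffman–Spruck, the decisive simplification being that nonpositive ambient curvature forces the relevant comparison terms to point in our favour, so that the sharp Euclidean constant survives intact. The heart of the argument is a monotonicity formula for the mass of $M$ in extrinsic balls; the global $L^{n/(n-1)}$ estimate with the precise integral term $\int_M(|\nabla g|+n|H|g)$ is then recovered from an isoperimetric inequality by a layer–cake decomposition. Concretely, writing a nonnegative $g\in C_0^1(M)$ as $g=\int_0^\infty \chi_{\{g>t\}}\,\text{d}t$, the integral Minkowski inequality gives $\|g\|_{n/(n-1)}\le\int_0^\infty \mathrm{Vol}(\{g>t\})^{(n-1)/n}\,\text{d}t$; applying to each super-level set an isoperimetric inequality of the form $\mathrm{Vol}(\Omega)^{(n-1)/n}\le D(n)\big(\mathrm{Vol}_{n-1}(\partial\Omega)+n\int_\Omega |H|\,\text{d}v\big)$ and integrating in $t$, the coarea formula $\int_0^\infty \mathrm{Vol}_{n-1}(\{g=t\})\,\text{d}t=\int_M|\nabla g|\,\text{d}v$ together with Fubini's theorem $\int_0^\infty\!\!\int_{\{g>t\}}|H|\,\text{d}v\,\text{d}t=\int_M |H|g\,\text{d}v$ produces exactly the stated Sobolev inequality. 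Thus it suffices to establish the isoperimetric inequality for domains, which is where the curvature hypothesis enters.

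To prove the isoperimetric inequality I would fix $x_0\in M$, set $r(x)=d_N(x_0,x)$ for the ambient distance, and apply the divergence theorem on $M$ to the vector field $X=\varphi(r)\nabla^N r$ for a nonincreasing radial cutoff $\varphi$. Since $\operatorname{div}_M X=\varphi'(r)|\nabla^M r|^2+\varphi(r)\sum_{i=1}^n \operatorname{Hess}^N r(e_i,e_i)$ for an orthonormal frame $\{e_i\}$ of $TM$, and since the first variation of area yields $\int_M \operatorname{div}_M X\,\text{d}v=-\int_M\langle X,\mathbf H\rangle\,\text{d}v$ with $\mathbf H$ the trace of the second fundamental form ($|\mathbf H|=n|H|$), one obtains the identity
$$\int_M \varphi(r)\sum_{i=1}^n \operatorname{Hess}^N r(e_i,e_i)\,\text{d}v=-\int_M \varphi'(r)|\nabla^M r|^2\,\text{d}v-\int_M \varphi(r)\langle\nabla^N r,\mathbf H\rangle\,\text{d}v.$$
The essential input is the Hessian comparison theorem: because the sectional curvatures of $N$ satisfy $K_N\le 0$, the distance function obeys $\operatorname{Hess}^N r\ge \tfrac1r\,(g_N-\text{d}r\otimes \text{d}r)$, hence $\sum_{i=1}^n \operatorname{Hess}^N r(e_i,e_i)\ge \tfrac1r\big(n-|\nabla^M r|^2\big)\ge \tfrac{n-1}{r}$. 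Substituting this lower bound, using $|\langle\nabla^N r,\mathbf H\rangle|\le n|H|$, and letting $\varphi$ approximate the indicator of $\{r<\rho\}$ yields a differential inequality for the weighted mass ratio $\rho\mapsto \rho^{-n}\mathrm{Vol}(M\cap B_\rho(x_0))$ whose only obstruction is the integrated mean-curvature term $\int |H|\,\text{d}v$.

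Integrating this monotonicity inequality and letting the inner radius tend to $0$ — where, since $M$ is a smooth $n$-dimensional submanifold, $\lim_{\rho\to0}\rho^{-n}\mathrm{Vol}(M\cap B_\rho(x_0))=\sigma_n$, so the volume of the unit Euclidean ball appears — converts the mass comparison into the desired isoperimetric inequality for domains, with the mean-curvature integral as the correction term. Carefully tracking constants through the Hessian comparison, the admissible choice of $\varphi$, and the layer–cake reduction then pins down the explicit value $D(n)=\frac{2^n(1+n)^{(n+1)/n}}{(n-1)\sigma_n^{1/n}}$. I expect the main obstacle to be arranging that the mean curvature enters as the \emph{integral} $\int(|\nabla g|+n|H|g)$ rather than through a pointwise $\sup|H|$, which a naive divergence computation would give: this is precisely why one keeps the weight inside the monotonic quantity and globalizes through the super-level sets, and it is the only point where the nonpositivity of $K_N$ is genuinely used, namely to discard all remaining ambient-curvature terms in the comparison.
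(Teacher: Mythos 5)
The paper offers no proof of Lemma \ref{lem:2.2}: it is quoted directly from Hoffman--Spruck \cite{HS}, so the benchmark is the original Michael--Simon/Hoffman--Spruck argument. Your outer reduction is sound and is indeed how that proof globalizes: the layer--cake decomposition with Minkowski's integral inequality, the coarea formula for $\int_M|\nabla g|$, and Fubini for the $n|H|g$ term are all correct; likewise the first-variation identity for $X=\varphi(r)\nabla^N r$ and the Hessian comparison $\operatorname{Hess}^N r\ge \tfrac1r\,(g_N-\text{d}r\otimes \text{d}r)$ under $K_N\le0$ are exactly the right ingredients. (One caveat you gloss over, as does the paper's statement: this comparison needs $r$ to be smooth, i.e.\ you are implicitly working within the injectivity radius; \cite{HS} carries an explicit injectivity-radius hypothesis, which is vacuous precisely when $N$ is Cartan--Hadamard.)

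The genuine gap is the third paragraph, where you claim that integrating the monotonicity inequality at a \emph{single} fixed $x_0$ and letting the inner radius tend to $0$ ``converts the mass comparison into the desired isoperimetric inequality.'' It does not. At one point, monotonicity together with $\lim_{\rho\to0}\rho^{-n}\mathrm{Vol}(M\cap B_\rho(x_0))=\sigma_n$ yields only a statement localized at $x_0$: either a lower density bound $\mathrm{Vol}(\Omega\cap B_\rho)\ge c\,\rho^n$ up to some scale, or the existence of one radius at which $\mathrm{Vol}_{n-1}(\partial\Omega\cap B_\rho)+n\int_{\Omega\cap B_\rho}|H|\,\text{d}v\ge c\,\rho^{n-1}$. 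Neither controls $\mathrm{Vol}(\Omega)$ globally. The actual passage to $\mathrm{Vol}(\Omega)^{(n-1)/n}\le D(n)\big(\mathrm{Vol}_{n-1}(\partial\Omega)+n\int_\Omega|H|\,\text{d}v\big)$ requires running the monotonicity at almost every point of $\Omega$, selecting for each point a good radius via a stopping-time alternative, and then a Vitali-type covering argument: one extracts disjoint balls $B_{\rho_i}$ whose dilates cover $\Omega$ and sums the local estimates using disjointness and the elementary inequality $\sum_i\rho_i^{\,n}\le\big(\sum_i\rho_i^{\,n-1}\big)^{n/(n-1)}$. This covering lemma is the technical heart of Michael--Simon and of \cite{HS}, and it is precisely where the factors $2^n$ and $(1+n)^{(n+1)/n}$ in $D(n)$ originate; consequently your concluding claim that ``carefully tracking constants \ldots pins down'' $D(n)$ cannot be executed from the sketch as written, and your opening assertion that the sharp Euclidean constant ``survives intact'' is also inaccurate --- $D(n)$ is far from the sharp isoperimetric constant, exactly because of the covering step your outline omits.
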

From the above lemma we can derive the following
\begin{lem} \label{lem:2.3}
Let $M^n$ be a complete submanifold immersed in $\mathbb{H}^{n+m}$. Suppose that $D(n)n||H||_n<1$ where
$D(n)$ is the constant in Lemma \ref{lem:2.2}. Then for any $f\in C_{0}^{1}(M)$, we have
$$(\int_M f^{\frac{2n}{n-2}}\text{d}v)^\frac{n-2}{n}\le C_{13}(n)\int_M |\nabla f|^2\text{d}v,$$
where $C_{13}(n)=(\frac{D(n)}{1-D(n)n||H||_n} \frac{2(n-1)}{n-2})^2$.  
\end{lem}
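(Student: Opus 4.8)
The plan is to bootstrap the first-order Sobolev inequality of Lemma \ref{lem:2.2} into the stated second-order inequality by a power substitution followed by an absorption argument. Since $\mathbb{H}^{n+m}$ has constant sectional curvature $-1<0$, Lemma \ref{lem:2.2} applies verbatim to $M$. First I would reduce to the case $f\ge 0$ by replacing $f$ with $|f|$: setting $p:=\frac{2(n-1)}{n-2}$, which satisfies $p>1$ for $n\ge 3$, the function $|f|^{p}$ is $C^1$ with compact support and $|\nabla|f||=|\nabla f|$ almost everywhere, so nothing is lost (if desired one extends Lemma \ref{lem:2.2} to Lipschitz compactly supported functions by density). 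I would then apply Lemma \ref{lem:2.2} to the test function $g=f^{p}$.

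The exponent $p$ is chosen precisely so that the left-hand side becomes the target quantity: since $\frac{pn}{n-1}=\frac{2n}{n-2}$, and $\nabla(f^{p})=p\,f^{p-1}\nabla f$, Lemma \ref{lem:2.2} yields
\begin{eqnarray*}
\left(\int_M f^{\frac{2n}{n-2}}\text{d}v\right)^{\frac{n-1}{n}}\le D(n)\int_M\left(p\,f^{p-1}|\nabla f|+n|H|f^{p}\right)\text{d}v.
\end{eqnarray*}
For the gradient term I would apply the Cauchy--Schwarz inequality, observing that $2(p-1)=\frac{2n}{n-2}$, to obtain
\begin{eqnarray*}
\int_M f^{p-1}|\nabla f|\,\text{d}v\le \left(\int_M f^{\frac{2n}{n-2}}\text{d}v\right)^{\frac12}\left(\int_M|\nabla f|^2\text{d}v\right)^{\frac12}.
\end{eqnarray*}
For the mean-curvature term I would apply H\"older's inequality with conjugate exponents $n$ and $\frac{n}{n-1}$; the decisive point is that this conjugate exponent reproduces exactly the Sobolev power appearing on the left, namely
\begin{eqnarray*}
\int_M|H|f^{p}\,\text{d}v\le ||H||_n\left(\int_M f^{\frac{pn}{n-1}}\text{d}v\right)^{\frac{n-1}{n}}=||H||_n\left(\int_M f^{\frac{2n}{n-2}}\text{d}v\right)^{\frac{n-1}{n}}.
\end{eqnarray*}

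Writing $T:=\int_M f^{\frac{2n}{n-2}}\text{d}v$, which is finite (and may be assumed positive, else the inequality is trivial) since $f$ has compact support, the three displays combine to
\begin{eqnarray*}
T^{\frac{n-1}{n}}\le D(n)p\,T^{\frac12}\left(\int_M|\nabla f|^2\text{d}v\right)^{\frac12}+D(n)\,n||H||_n\,T^{\frac{n-1}{n}}.
\end{eqnarray*}
Because the curvature term carries the \emph{same} power $T^{\frac{n-1}{n}}$ as the left side, I can transfer it to the left and divide by $1-D(n)n||H||_n$, which is positive precisely by the hypothesis $D(n)n||H||_n<1$. Using $\frac{n-1}{n}-\frac12=\frac{n-2}{2n}$ and then squaring gives
\begin{eqnarray*}
T^{\frac{n-2}{n}}\le \left(\frac{D(n)\,p}{1-D(n)n||H||_n}\right)^2\int_M|\nabla f|^2\text{d}v,
\end{eqnarray*}
and substituting $p=\frac{2(n-1)}{n-2}$ identifies the constant as $C_{13}(n)$, completing the proof. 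There is no serious obstacle in this argument, which is a single Moser-type iteration step; the only matters requiring care are the exact bookkeeping of exponents (so that the gradient term produces $T^{1/2}$ while the curvature term produces $T^{(n-1)/n}$, making the absorption possible) and the mild regularity justification for inserting $g=|f|^{p}$ into Lemma \ref{lem:2.2}.
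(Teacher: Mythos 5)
Your proposal is correct and follows essentially the same route as the paper's own proof: the substitution $h=f^{\frac{2(n-1)}{n-2}}$ into Lemma \ref{lem:2.2}, Cauchy--Schwarz on the gradient term, H\"older with exponents $n$ and $\frac{n}{n-1}$ on the mean-curvature term, and absorption of the $T^{\frac{n-1}{n}}$ term using $D(n)n\|H\|_n<1$. Your additional remarks on reducing to $f\ge 0$ and on the regularity needed to insert $|f|^{p}$ into Lemma \ref{lem:2.2} are careful touches the paper leaves implicit.
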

\begin{proof} For a function $h$ as in Lemma \ref{lem:2.2}, we have
$$(\int_M h^{\frac{n}{n-1}}\text{d}v)^\frac{n-1}{n}\le D(n)\int_M(|\nabla h|+n|H|h)\text{d}v,$$
Let $h=f^{\frac{2(n-1)}{n-2}}$, by Hölder's inequality, we have
\begin{align}
(\int_M f^{\frac{2n}{n-2}}\text{d}v)^\frac{n-1}{n} &\le D(n)\frac{2(n-1)}{n-2}\int_M f^{\frac{n}{n-2}}|\nabla f|\text{d}v+D(n)n\int_M |H|f^{\frac{2(n-1)}{n-2}}\text{d}v \notag \\
&\le D(n) \frac{2(n-1)}{n-2} (\int_M f^{\frac{2n}{n-2}}\text{d}v)^{\frac {1}{2}} (\int_M |\nabla f|^{2}\text{d}v)^{\frac 1 2}\notag \\
&+ nD(n)(\int_M f^{\frac{2n}{n-2}}\text{d}v)^{\frac {n-1}{n}} (\int_M |H|^{n}\text{d}v)^{\frac 1 n}. \notag
\end{align}
Hence
$$(\int_M f^{\frac{2n}{n-2}}\text{d}v)^\frac{n-2}{n}\le (\frac{D(n)}{1-D(n)n||H||_n} \frac{2(n-1)}{n-2})^2 \int_M |\nabla f|^2\text{d}v,$$
which implies the conclusion of this lemma.
\end{proof}

The following lower bound estimate for the first eigenvalue of complete noncompact submanifolds in the hyperbolic space will be very important in proofs of our main theorems.
\begin{lem}[\cite{Lin}] \label{lem:2.4}
 Let $M^n$ be a complete noncompact submanifold in $\mathbb{H}^{n+m}$ and $\lambda_1 (M)$ the first eigenvalue of $M$ which is defined by
$$\lambda_1(M)=\underset{f\ne 0,f\in  \mathring{H}_{1}^{2}(M)}{inf} \frac{\int_M |\nabla f|^2\text{d}v}{\int_M f^2\text{d}v}.$$
If $D(n)n||H||_n<1$, we have
\begin{align} \label{eq:2.7}
\lambda_1(M) \ge \frac{(n-1)^2(1-D(n)n||H||_n)^2}{4}>0,
\end{align}
where $D(n)=\frac {2^n(1+n)^{\frac {n+1} n}}{(n-1)(\sigma_n)^{\frac 1 n}}$ and $\sigma_n$ is volume of the unit ball in $\mathbb{R}^n$.
\end{lem}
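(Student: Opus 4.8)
The plan is to bound the Rayleigh quotient from below by first establishing, for every test function, a Cheeger-type $L^1$ inequality and then applying the Cauchy--Schwarz inequality. Since $C_0^1(M)$ is dense in $\mathring H_1^2(M)$, it suffices to prove that for all $f\in C_0^1(M)$,
$$(n-1)(1-D(n)n||H||_n)\int_M f^2\,\text{d}v \le 2\int_M |f||\nabla f|\,\text{d}v;$$
indeed, by Cauchy--Schwarz the right-hand side is at most $2(\int_M f^2\,\text{d}v)^{\frac12}(\int_M|\nabla f|^2\,\text{d}v)^{\frac12}$, and squaring then yields $\int_M|\nabla f|^2\,\text{d}v \ge \frac{(n-1)^2(1-D(n)n||H||_n)^2}{4}\int_M f^2\,\text{d}v$, whence \eqref{eq:2.7} follows by taking the infimum.

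The geometric heart is a Laplacian comparison for the ambient distance function restricted to $M$. Let $\rho = d_{\mathbb{H}^{n+m}}(p,\cdot)$ for a fixed $p\notin M$; since $\mathbb{H}^{n+m}$ has no cut locus, $\rho$ is smooth on $M$. Using the hyperbolic Hessian comparison $\text{Hess}\,\rho = \coth\rho\,(\bar g - d\rho\otimes d\rho)$ together with the submanifold formula $\Delta_M\rho = \sum_{i=1}^n \text{Hess}\,\rho(e_i,e_i) + \langle \nabla\rho, \vec H\rangle$, and the bounds $\coth\rho\ge 1$, $|\nabla_M\rho|^2\le 1$ and $|\vec H| = n|H|$, I would derive
$$\Delta_M\rho \ge (n-1) - n|H|.$$
Multiplying by $f^2$, integrating over $M$, and integrating by parts then gives the basic inequality
$$(n-1)\int_M f^2\,\text{d}v - n\int_M|H|f^2\,\text{d}v \le 2\int_M |f||\nabla f|\,\text{d}v, \qquad (\star)$$
where I used $|\nabla_M\rho|\le 1$ once more in estimating the boundary term.

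Next I would absorb the mean curvature term using the Hoffman--Spruck inequality of Lemma \ref{lem:2.2}. By Hölder's inequality,
$$n\int_M|H|f^2\,\text{d}v \le n||H||_n\Big(\int_M f^{\frac{2n}{n-1}}\,\text{d}v\Big)^{\frac{n-1}{n}},$$
while Lemma \ref{lem:2.2} applied to $g=f^2$ (so that $|\nabla g| = 2|f||\nabla f|$) bounds the Sobolev norm on the right by $D(n)\big(2\int_M|f||\nabla f|\,\text{d}v + n\int_M|H|f^2\,\text{d}v\big)$. Writing $a = D(n)n||H||_n < 1$ and rearranging yields $n\int_M|H|f^2\,\text{d}v \le \frac{2a}{1-a}\int_M|f||\nabla f|\,\text{d}v$. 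Substituting this into $(\star)$ produces exactly the Cheeger-type inequality isolated in the first paragraph, which completes the proof.

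The main obstacle is the geometric step: one must correctly combine the ambient Hessian comparison with the second-fundamental-form correction term in the submanifold Laplacian formula, and verify that the sign and size of the mean-curvature defect are precisely $-n|H|$ so that the constant in \eqref{eq:2.7} comes out sharp. The subsequent analytic absorption is then routine given Lemma \ref{lem:2.2}, provided one notes that $||H||_n<\infty$ (forced by the hypothesis $D(n)n||H||_n<1$), so that all integrals over the compact support of $f$ are finite and Hölder's inequality applies.
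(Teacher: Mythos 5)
Your proposal is correct, and it is essentially the proof of the cited source: the paper itself quotes Lemma \ref{lem:2.4} from \cite{Lin} without proof, and Lin's argument is exactly your route --- the Hessian comparison giving $\Delta_M\rho\ge(n-1)-n|H|$ for the ambient distance function, absorption of the mean curvature term via the Hoffman--Spruck inequality of Lemma \ref{lem:2.2} applied to $g=f^2$, and then Cauchy--Schwarz on the resulting Cheeger-type $L^1$ inequality. (Your phrase about a ``boundary term'' is a slight misnomer --- for compactly supported $f$ there is none, and $|\nabla_M\rho|\le 1$ is used to bound $|\langle\nabla f^2,\nabla\rho\rangle|$ --- but this does not affect the argument.)
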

\begin{lem} [\cite{XX}] \label{lem:2.5}
 Let $M^n$ be $n$-dimensional immersed submanifold with parallel mean curvature in $\mathbb{H}^{n+m},$ then $$|\nabla Å|^2-|\nabla|Å||^2 \ge \frac 2 n |\nabla|Å||^2.$$
\end{lem}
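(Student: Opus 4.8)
The inequality is a sharp refined Kato inequality for the trace-free second fundamental form, so the plan is to recast it in that form and isolate the two algebraic inputs it rests on. Writing $\mathring{A}=\{\mathring h_{ij}^\alpha\}$ for the trace-free second fundamental form (the Å of the statement) and $\mathring h_{ijk}^\alpha=\nabla_k\mathring h_{ij}^\alpha$ for the components of $\nabla\mathring A$, the claim $|\nabla\mathring A|^2-|\nabla|\mathring A||^2\ge\frac2n|\nabla|\mathring A||^2$ is equivalent to
\begin{eqnarray}
|\nabla|\mathring A||^2\le\tfrac{n}{n+2}\,|\nabla\mathring A|^2 . \notag
\end{eqnarray}
I would prove this pointwise on the open set $\{|\mathring A|>0\}$, where $|\mathring A|$ is smooth; on the complement the inequality holds almost everywhere (both sides vanish a.e. on the interior of the zero set, and its boundary is null), which suffices for all integral applications. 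The two facts I would establish first are: (i) $\mathring h_{ijk}^\alpha$ is totally symmetric in $i,j,k$, and (ii) it is trace-free, $\sum_i\mathring h_{iik}^\alpha=0$. Fact (i) follows from the Codazzi equation in the constant-curvature ambient $\mathbb{H}^{n+m}$, which gives $h_{ij,k}^\alpha=h_{ik,j}^\alpha$ (the ambient curvature term vanishes since $\alpha$ is normal while $i,j,k$ are tangent), together with the parallel mean curvature hypothesis, which makes the umbilic part parallel so that $\mathring h_{ijk}^\alpha=h_{ij,k}^\alpha$; combined with symmetry in $i,j$ this yields full symmetry. Fact (ii) is the derivative of the identity $\sum_i\mathring h_{ii}^\alpha=0$.

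Since the several normal components cannot be diagonalized simultaneously, I would next peel off the codimension. Put $v^\alpha=(\sum_{ij}\mathring h_{ij}^\alpha\mathring h_{ijk}^\alpha)_k\in\R^n$ and $w_\alpha=|\mathring A_\alpha|^2$, so that $\frac12\nabla_k|\mathring A|^2=\sum_\alpha v_k^\alpha$ and $|\mathring A|^2=\sum_\alpha w_\alpha$. The elementary vector Cauchy--Schwarz inequality
\begin{eqnarray}
\frac{\big|\sum_\alpha v^\alpha\big|^2}{\sum_\alpha w_\alpha}\le\sum_{\alpha:\,w_\alpha>0}\frac{|v^\alpha|^2}{w_\alpha} \notag
\end{eqnarray}
reduces the whole estimate to the single-component inequality $|\nabla\mathring A_\alpha|^2\ge\frac{n+2}{n}\,|v^\alpha|^2/w_\alpha$ for each fixed $\alpha$ with $w_\alpha>0$ (components with $w_\alpha=0$ have $v^\alpha=0$ and only help); summing the per-$\alpha$ inequalities and applying the display recovers the full claim with the same constant.

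For a single component I would diagonalize $\mathring A_\alpha=\mathrm{diag}(\lambda_1,\dots,\lambda_n)$ at the point, so the target becomes $(\sum_i\lambda_i^2)\,|\nabla\mathring A_\alpha|^2\ge\frac{n+2}{n}\sum_k(\sum_i\lambda_i\mathring h_{iik}^\alpha)^2$. Discarding from $|\nabla\mathring A_\alpha|^2$ all components $\mathring h_{ijk}^\alpha$ with $i,j,k$ pairwise distinct and using total symmetry to count multiplicities, I retain $|\nabla\mathring A_\alpha|^2\ge\sum_k\big(3\sum_i(\mathring h_{iik}^\alpha)^2-2(\mathring h_{kkk}^\alpha)^2\big)$. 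The estimate then splits over $k$, and for each fixed $k$ reduces, with $y=(\mathring h_{iik}^\alpha)_i$ (so $y_k=\mathring h_{kkk}^\alpha$ and $\sum_i y_i=0$ by fact (ii)), to the purely scalar statement
\begin{eqnarray}
\Big(\sum_i\lambda_i^2\Big)\Big(3|y|^2-2y_k^2\Big)\ge\tfrac{n+2}{n}\Big(\sum_i\lambda_i y_i\Big)^2 . \notag
\end{eqnarray}
This follows by combining two elementary bounds: Cauchy--Schwarz $(\sum_i\lambda_i y_i)^2\le(\sum_i\lambda_i^2)|y|^2$, and the constraint bound $y_k^2\le\frac{n-1}{n}|y|^2$, which is exactly what $\sum_i y_i=0$ forces and which converts $3|y|^2-2y_k^2\ge\frac{n+2}{n}|y|^2$. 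I expect this coordinated use of the Codazzi symmetry (to drop the distinct-index terms with the correct multiplicities) and the trace-free constraint (to control $y_k^2$) to be the crux: it is precisely what produces the sharp factor $\frac{n+2}{n}$, and a direct check when $n=2$ shows both elementary bounds are simultaneously saturated, so no slack is lost. The remaining bookkeeping---verifying the multiplicities, checking that the per-$k$ groups of components are disjoint, and the measure-zero treatment of $\{|\mathring A|=0\}$---is routine.
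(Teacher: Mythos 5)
Your proof is correct, and I have checked the three places where it could plausibly fail. First, the algebraic preliminaries are right: in the constant-curvature ambient $\mathbb{H}^{n+m}$ the Codazzi equation gives $h^\alpha_{ijk}=h^\alpha_{ikj}$, and parallel mean curvature makes $(H^\alpha)_{,k}=0$, so $\mathring h^\alpha_{ijk}=h^\alpha_{ijk}$ is totally symmetric and $\sum_i \mathring h^\alpha_{iik}=0$. Second, the multiset bookkeeping checks out: each unordered triple $\{i,i,k\}$ with $i\ne k$ occurs three times in $\sum_{i,j,k}(\mathring h^\alpha_{ijk})^2$ and is assigned to exactly one group $k$, so $|\nabla \mathring A_\alpha|^2\ge \sum_k\bigl(3\sum_i(\mathring h^\alpha_{iik})^2-2(\mathring h^\alpha_{kkk})^2\bigr)$ with no double counting, and each bracket is nonnegative. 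Third, the per-$k$ scalar inequality follows as you say, since $\sum_i y_i=0$ forces $y_k^2\le\frac{n-1}{n}|y|^2$, whence $3|y|^2-2y_k^2\ge\frac{n+2}{n}|y|^2$, and Cauchy--Schwarz finishes; the weighted Cauchy--Schwarz over $\alpha$ (with $v^\alpha=0$ whenever $w_\alpha=0$) then assembles the codimension correctly, and your a.e. treatment of the set $\{|\mathring A|=0\}$ is the standard and adequate one.

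Be aware, though, that the paper itself contains no proof of this lemma: it is quoted from Xu--Xu \cite{XX}, so there is no in-paper argument to match. The proof in that reference (and in the earlier literature going back to Xu \cite{Xu} and Xu--Gu \cite{XuG}) is organized differently: one fixes at a point a tangent direction $e_1$ parallel to $\nabla|\mathring A|$, bounds $|\nabla|\mathring A||^2\le\sum_{i,j,\alpha}(\mathring h^\alpha_{ij1})^2$ by a single Cauchy--Schwarz, and then extracts the extra $\frac2n\sum_{i,j,\alpha}(\mathring h^\alpha_{ij1})^2$ from the discarded components $\mathring h^\alpha_{ijk}$, $k\ge2$, using the same two structural facts you isolated (total symmetry and $\sum_i\mathring h^\alpha_{ii1}=0$). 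Your decomposition --- first over normal directions via the weighted Cauchy--Schwarz, then over the derivative index $k$ --- is somewhat longer but buys a cleaner separation of the codimension issue (no need to handle all $\alpha$ in one frame) and makes visible exactly where the constant $\frac{n+2}{n}$ is produced and saturated; the classical route is shorter because one privileged direction absorbs all the work. Both arguments rest on identical ingredients, so this is a difference of organization rather than of method.
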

\begin{lem} [\cite{San}] \label{lem:2.6}
Let $a_1,\cdots ,a_n$ and $b_1,\cdots,b_n$ be real numbers satisfying
$\sum\limits_{i}a_i=\sum\limits_i b_i=0, \sum\limits_i a_{i}^{2}=a$ and $\sum\limits_i b_{i}^{2}=b.$ Then we have $$|\sum\limits_i a_i b_{i}^{2}|\le d(n)\sqrt{a}b,$$
where $d(n)=\frac{n-2}{\sqrt{n(n-1)}}.$
\end{lem}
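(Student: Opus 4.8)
\section*{Proof proposal}

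The plan is to first optimize over the $a_i$ with the $b_i$ held fixed, since the hypothesis $\sum_i a_i=0$ makes a Cauchy--Schwarz step sharp. Because $\sum_i a_i=0$, subtracting the mean of the numbers $b_i^2$ is free, so $\sum_i a_i b_i^2=\sum_i a_i\bigl(b_i^2-\frac{b}{n}\bigr)$. Cauchy--Schwarz then gives
\[
\Bigl|\sum_i a_i b_i^2\Bigr|\le\Bigl(\sum_i a_i^2\Bigr)^{1/2}\Bigl(\sum_i\bigl(b_i^2-\tfrac{b}{n}\bigr)^2\Bigr)^{1/2}=\sqrt{a}\,\Bigl(\sum_i b_i^4-\frac{b^2}{n}\Bigr)^{1/2},
\]
where I expanded the second factor using $\sum_i b_i^2=b$. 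This collapses the entire problem into a single inequality in the $b_i$ alone, and the bound is attained, the extremal $a_i$ being proportional to $b_i^2-\frac{b}{n}$ (which automatically has zero sum).

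It therefore remains to show that, under $\sum_i b_i=0$ and $\sum_i b_i^2=b$, one has $\sum_i b_i^4\le\frac{n^2-3n+3}{n(n-1)}\,b^2$. Granting this, the lemma follows at once, since $\frac{n^2-3n+3}{n(n-1)}-\frac1n=\frac{(n-2)^2}{n(n-1)}=d(n)^2$, so the displayed estimate becomes $\bigl|\sum_i a_i b_i^2\bigr|\le\sqrt{a}\cdot d(n)\,b$, which is exactly the claim. Note the constraint $\sum_i b_i=0$ is essential here: without it the fourth-power sum could reach $b^2$ (all mass on one entry), whereas the asserted constant is strictly below $1$.

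To prove this fourth-power inequality I would maximize $\sum_i b_i^4$ over the compact set $\{\sum_i b_i=0,\ \sum_i b_i^2=b\}$ and identify the extremizers. The engine is a two-value reduction: if three of the entries were distinct, freeze the other $n-3$ and move the chosen triple $b_i,b_j,b_k$ along the circle determined by its (now fixed) sum and sum of squares. Along this circle the elementary symmetric functions $e_1,e_2$ of the triple stay constant while $e_3=b_ib_jb_k$ sweeps an interval, and Newton's identities give $b_i^4+b_j^4+b_k^4=\mathrm{const}+4e_1e_3$, an affine function of $e_3$. Hence the triple's contribution is extremized at an endpoint of the $e_3$-interval, where the defining cubic has a double root and two of the three values coincide; iterating drives the maximizer to at most two distinct values. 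For a two-value configuration with $p$ entries equal to $x$ and $q=n-p$ equal to $y$, the constraints force $y=-\frac{p}{q}x$ and a direct computation yields $\sum_i b_i^4/b^2=\frac{n}{p(n-p)}-\frac{3}{n}$, which over $p\in\{1,\dots,n-1\}$ is largest when $p(n-p)$ is smallest, i.e. $p=1$, giving precisely $\frac{n^2-3n+3}{n(n-1)}$ and equality in the lemma. The main obstacle is making this reduction airtight: the affine-in-$e_3$ observation cleanly disposes of any single triple of distinct values, but one must confirm that iterating it terminates at a global maximizer with at most two distinct values (and handle the degenerate case of a triple whose sum vanishes, where $\sum b_i^4$ is constant along its circle). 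I expect this to follow from a standard compactness-plus-Lagrange argument: extremizers satisfy $4b_i^3=2\mu b_i+\lambda$, so each $b_i$ is a root of a fixed cubic and takes at most three values, after which the affine observation eliminates the genuine three-value case.
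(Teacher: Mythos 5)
The paper offers no proof to compare against: Lemma \ref{lem:2.6} is quoted from Santos \cite{San} as a known result, so your attempt stands on its own merits. The first two steps are correct and sharp: since $\sum_i a_i=0$, Cauchy--Schwarz gives $|\sum_i a_ib_i^2|\le\sqrt{a}\,(\sum_i b_i^4-b^2/n)^{1/2}$ with equality attainable, the reduction to the fourth-moment inequality $\sum_i b_i^4\le\frac{n^2-3n+3}{n(n-1)}b^2$ is right (indeed $\frac{n^2-3n+3}{n(n-1)}-\frac1n=\frac{(n-2)^2}{n(n-1)}=d(n)^2$), and your two-value computation $\sum_i b_i^4/b^2=\frac{n}{p(n-p)}-\frac3n$, maximized at $p=1$, produces exactly the required constant, with the correct equality configuration.

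The genuine gap is exactly where you flagged the "main obstacle," and your proposed resolution does not close it. At an extremizer the Lagrange condition makes every $b_i$ a root of $4t^3-2\mu t-\lambda$, a cubic with \emph{vanishing quadratic coefficient}; by Vieta, if three distinct values $u,v,w$ occur then $u+v+w=0$ automatically. So every triple formed by one entry of each value has $e_1=0$, the slope $4e_1$ of your affine function is zero, and the affine observation eliminates nothing: the "degenerate case" you set aside in parentheses is the \emph{only} case that arises at a critical point. Worse, sliding such a triple to an endpoint of the $e_3$-interval preserves the objective but replaces its values by $(s,s,-2s)$ with $s=\sqrt{-e_2/3}$, which are in general new values, so the iteration can increase the number of distinct values in the full configuration and you have no termination argument. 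The fix is to finish the Lagrange analysis directly instead of by reduction: if the three distinct values have multiplicities $p,q,r\ge1$, the two linear conditions $u+v+w=0$ and $pu+qv+rw=0$ force $(u,v,w)=\bigl((q-r)t,(r-p)t,(p-q)t\bigr)$ for some $t\ne0$, and a short computation (ordering $p\ge q\ge r$, so the largest squared value is $(p-r)^2$, and checking $2(p-r)^2\le p(q-r)^2+q(r-p)^2+r(p-q)^2$ when $q\ge2$, with equality throughout when $q=r=1$) yields $\sum_i b_i^4\le\frac12 b^2$ at every genuine three-value critical point. Since $\frac12\le\frac{n^2-3n+3}{n(n-1)}$ for all $n$ (strictly for $n\ge4$, and the cases $n\le3$ admit no genuine three-value critical points), the maximum is attained only at two-value configurations, and with this computation inserted your proof is complete.
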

\subsection{Hypersurfaces}
For a smooth function $f$ on a hypersurface $M^n$ in $\mathbb{H}^{n+1}$, we introduce an operator $\Box$ due to Cheng-Yau \cite{CY} by 
\begin{eqnarray*}
\Box f\equiv (nH\delta_{ij}-h_{ij})f_{ij},
\end{eqnarray*}
and we denote $(P_1)_{ij}=nH\delta_{ij}-h_{ij}$ in the rest of this paper. Note that $P_1=nHg-A$ is the first order Newton transformation.
Then (see \cite{ACC, Li})
\begin{eqnarray}
\Box H=\frac 1 n |\nabla A|^2 - n|\nabla H|^2-|A|^2+nH^2+HtrA^3-\frac 1 n |A|^4.  \label{eq:2.3}
\end{eqnarray}
It is easy to see that $$|trA^3|\le |A|^3.$$
Therefore we obtain $$\Box H \ge \frac 1 n |\nabla A|^2 - n|\nabla H|^2-|A|^2+nH^2-|H||A|^3-\frac 1 n |A|^4.$$
If the scalar curvature $R=n(1-n)$, then by (\ref{eq:2.2}) we see that     $$n^2 H^2=|A|^2,$$
and hence 
\begin{eqnarray}
\Box H \ge \frac 1 n |\nabla A|^2 - n|\nabla H|^2+n(1-n)H^2-2n^3H^4.\label{eq:2.4}
\end{eqnarray}
\begin{lem}\label{lem:2.1}
Let $M^n$ is a hypersurface in the hyperbolic space $\mathbb{H}^{n+1}$ with constant scalar curvature $R=n(1-n)$, then $$\frac 1 n |\nabla A|^2\ge n|\nabla H|^2.$$
\end{lem}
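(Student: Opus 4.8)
The plan is to reduce the lemma to a pointwise algebraic inequality for the components $h_{ijk}=\nabla_k h_{ij}$ of $\nabla A$, using the constant scalar curvature hypothesis twice: once pointwise and once after differentiation. By the Gauss equation \eqref{eq:2.2} the hypothesis $R=n(1-n)$ is equivalent to the identity $|A|^2=(\mathrm{tr}\,A)^2=n^2H^2$ on $M$. Writing $|\nabla A|^2=\sum_{i,j,k}h_{ijk}^2$ and noting that $n^2H^2=(\mathrm{tr}\,A)^2$ forces $n^2|\nabla H|^2=\sum_k(\sum_i h_{iik})^2$ (squaring removes the sign coming from $nH=|\mathrm{tr}\,A|$), the asserted inequality $\frac1n|\nabla A|^2\ge n|\nabla H|^2$ is exactly $\sum_{i,j,k}h_{ijk}^2\ge\sum_k(\sum_i h_{iik})^2$. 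A naive Cauchy--Schwarz only produces the factor $n$ on the right, so the constant scalar curvature condition must be invoked precisely to remove this factor.

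First I would differentiate the identity $|A|^2=(\mathrm{tr}\,A)^2$: for each fixed $k$ this yields the linear constraint $\sum_{i,j}h_{ij}h_{ijk}=(\mathrm{tr}\,A)\sum_i h_{iik}$. Fixing a point $p$ and diagonalizing $A$ there with principal curvatures $\lambda_i$, and setting $\nu_i:=\mathrm{tr}\,A-\lambda_i$ (the eigenvalues of $P_1$), the constraint becomes $\sum_i\nu_i h_{iik}=0$ for every $k$. The role of the constant scalar curvature is to pin down the two relevant symmetric functions of the $\nu_i$: a direct computation using $\sum_i\lambda_i^2=(\sum_i\lambda_i)^2$ gives $\sum_i\nu_i=(n-1)\,\mathrm{tr}\,A$ and $\sum_i\nu_i^2=(n-1)(\mathrm{tr}\,A)^2$.

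Then, for each fixed $k$, I would bound $(\sum_i h_{iik})^2$ against $\sum_i h_{iik}^2$ by using the constraint as a free degree of freedom. Since $\sum_i\nu_i h_{iik}=0$, for every real $t$ one has $\sum_i h_{iik}=\sum_i(1-t\nu_i)h_{iik}$, so Cauchy--Schwarz gives $(\sum_i h_{iik})^2\le\big(\sum_i(1-t\nu_i)^2\big)\big(\sum_i h_{iik}^2\big)$. Minimizing $\sum_i(1-t\nu_i)^2=n-2t\sum_i\nu_i+t^2\sum_i\nu_i^2$ over $t$ gives the value $n-\frac{(\sum_i\nu_i)^2}{\sum_i\nu_i^2}=n-(n-1)=1$. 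Hence $(\sum_i h_{iik})^2\le\sum_i h_{iik}^2\le\sum_{i,j}h_{ijk}^2$, and summing over $k$ yields $n^2|\nabla H|^2\le|\nabla A|^2$, which is the claim.

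The only step needing care is the degenerate locus. The optimal-$t$ computation requires $\sum_i\nu_i^2\ne0$, i.e. $\mathrm{tr}\,A\ne0$; but under $|A|^2=(\mathrm{tr}\,A)^2$ the conditions $\mathrm{tr}\,A=0$, $H=0$ and $A=0$ coincide. On the interior of $\{A=0\}$ both sides of the inequality vanish identically, while the argument above covers the open set $\{A\ne0\}$; since $\frac1n|\nabla A|^2$ and $n|\nabla H|^2$ are continuous, the inequality extends to all of $M$. This continuity remark, together with the observation that the scalar curvature constraint must be fed in through the first derivative rather than merely pointwise, is the only genuine subtlety, the rest being a short computation.
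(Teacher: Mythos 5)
Your proof is correct, and its core is the same as the paper's: both differentiate the pointwise identity $|A|^2=n^2H^2$ (equivalent to $R=n(1-n)$ via (\ref{eq:2.2})) and then apply Cauchy--Schwarz. In fact your optimization collapses to the paper's one-line estimate: the optimal parameter is $t^*=1/\mathrm{tr}\,A$, for which $1-t^*\nu_i=\lambda_i/\mathrm{tr}\,A$, so your optimized inequality reads $(\sum_i\lambda_i h_{iik})^2\le(\sum_i\lambda_i^2)(\sum_i h_{iik}^2)$, i.e.\ Cauchy--Schwarz applied to the pairing $\langle A,\nabla_k A\rangle=n^2HH_k$; the paper writes this directly as $|A\nabla A|^2\le|A|^2|\nabla A|^2$ and divides by $|A|^2=n^2H^2$, with no need for diagonalization or the computation of $\sum_i\nu_i$ and $\sum_i\nu_i^2$. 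The only point of genuine divergence is the degenerate locus $\{H=0\}$: there $A=0$, and the paper notes that the coefficients $(P_1)_{ij}=nH\delta_{ij}-h_{ij}$ of $\Box$ vanish, so $\Box H=0$ at such a point and formula (\ref{eq:2.3}) forces the pointwise identity $\frac1n|\nabla A|^2=n|\nabla H|^2$; you instead argue by density of $\{A\ne0\}\cup\mathrm{int}\{A=0\}$ together with continuity of both sides. Your route avoids invoking the second-order formula (\ref{eq:2.3}) on the zero set, which is a mild gain in self-containedness, but it tacitly uses continuity of $|\nabla H|^2$, i.e.\ the same smoothness of $H$ that the paper also takes for granted; either treatment of the zero set is valid.
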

\begin{proof} Since the scalar curvature $R=n(1-n)$,  by (\ref{eq:2.2}) we see that $$A\nabla A=n^2H\nabla H,$$ which implies that $$|A\nabla A|^2=n^4|H|^2|\nabla H|^2.$$
By Cauchy-Schwarz's inequality
$$|A \nabla A|^2\le|A|^2|\nabla A|^2.$$
Therefore
$$|A|^2|\nabla A|^2\ge n^4|H|^2|\nabla H|^2,$$
which implies that $$H^2(|\nabla A|^2-n^2|\nabla H|^2)\ge 0.$$
Therefore, for an arbitrary point $p\in M$,
\\if $H(p)\ne 0$, we have $$|\nabla A|^2\ge n^2|\nabla H|^2$$ at $p$, and
\\if $H(p)=0$, we have $|A|(p)=0,$ hence $$|\nabla A|^2=n^2|\nabla H|^2$$ at $p$ by (\ref{eq:2.3}).
\end{proof}

By the above lemma and (\ref{eq:2.4}) we finally arrive at
\begin{eqnarray}
\Box H \ge n(1-n)H^2-2n^3H^4.\label{eq:2.5}
\end{eqnarray}

\section{Proofs of Theorem \ref{th:1.3} and Theorem \ref{main thm2}}
\textbf{Proof of Theorem \ref{th:1.3}.} Let $M^n$ be an $n(n\ge 5)$-dimensional complete submanifold with parallel mean curvature in the hyperbolic space $\mathbb{H}^{n+m}$. According to \cite{LL}, we have $$\sum\limits_{\alpha,\eta}tr([Å_{\alpha},Å_{\eta}]^2)+\sum\limits_{\alpha,\eta}[tr(Å_{\alpha}Å_{\eta})]^2 \le b(m)|Å|^4 ,$$
where $$b(m)=1+\frac 1 2 sgn(m-1).$$
Recalling that $$\Delta|Å|^2=2|Å|\Delta|Å|+2|\nabla|Å||^2,$$ together with Lemma \ref{lem:2.5}, the above inequality imply that
\begin{align} \label{eq:4.1}
b(m)|Å|^4 &\ge |\nabla Å|^2-n|Å|^2+nH\sum\limits_{\alpha}tr(Å_{n+1}Å_{\alpha}^{2})-\frac1 2 \Delta|Å|^2 \notag \\
&=-|Å|\Delta|Å|-|\nabla|Å||^2+|\nabla Å|^2-n|Å|^2+nH\sum\limits_{\alpha}tr(Å_{n+1}Å_{\alpha}^{2})  \notag \\
&\ge -|Å|\Delta|Å|+\frac2 n |\nabla|Å||^2-n|Å|^2+nH\sum\limits_{\alpha}tr(Å_{n+1}Å_{\alpha}^{2}). 
\end{align}
Let $\phi$ be a function in $C_{0}^{\infty}(M).$  Multiplying (\ref{eq:4.1}) by $\phi^2$ and integrating by parts on $M$, we have
\begin{align} \label{eq:4.2}
b(m)\int_M \phi^2|Å|^4\text{d}v &\ge -\int_M \phi^2|Å|\Delta|Å|\text{d}v+\frac2 n \int_M \phi^2|\nabla|Å||^2\text{d}v \notag \\
&-n\int_M \phi^2|Å|^2\text{d}v+nH\int_M \phi^2 \sum\limits_{\alpha}tr(Å_{n+1}Å_{\alpha}^{2})\text{d}v \notag \\
&=\int_M \phi^2|\nabla|Å||^2\text{d}v+2\int_M \phi |Å|\langle \nabla|Å|,\nabla \phi \rangle \text{d}v +\frac2 n \int_M \phi^2|\nabla|Å||^2\text{d}v\notag \\
& -n\int_M \phi^2|Å|^2\text{d}v+nH\int_M \phi^2 \sum\limits_{\alpha}tr(Å_{n+1}Å_{\alpha}^{2})\text{d}v.
\end{align}
For a fixed $e_{\alpha}$, we choose an orthonormal local frame field $e_1,\cdots,e_n$ to diagonalize the matrix $\left\{ h_{ij}^{\alpha}\right\}$, i.e. $h_{ij}^{\alpha}=\lambda_{i}^{\alpha}\delta_{ij},$ $1\le i,j \le n.$ By Lemma 2.6, the last term of the above inequality is estimated as follows.
\begin{align}
|tr(Å_{n+1}Å_{\alpha}^{2})|&=|\sum\limits_i[Å_{n+1}]_{ii}([Å_{\alpha}]_{ii})^2| \notag \\
&\le d(n)|Å_{n+1}||Å_{\alpha}|^2.  \notag
\end{align}
Combining it with $$nH\le \sqrt{n}|A|,$$ we get
\begin{align}
|nH\int_M \phi^2 \sum\limits_{\alpha}tr(Å_{n+1}Å_{\alpha}^{2})\text{d}v|&\le nd(n)H\int_M \phi^2 |Å_{n+1}||Å|^2\text{d}v  \notag \\
&\le \sqrt{n}d(n)\int_M \phi^2|A|^2|Å|^{2}\text{d}v \notag \\
&\le \sqrt{n}d(n)(\int_M |A|^{n}\text{d}v)^{\frac 2 n}(\int_M |\phi |Å||^{\frac {2n} {n-2}}\text{d}v)^{\frac {n-2} n}. \notag 
\end{align}
By the Hölder's inequality, we get
$$b(m)\int_M \phi^2|Å|^4\text{d}v \le b(m)(\int_M |A|^{n}\text{d}v)^{\frac 2 n}(\int_M |\phi |Å||^{\frac {2n} {n-2}}\text{d}v)^{\frac {n-2} n}.$$
Set
\begin{align}
\Gamma=(b(m)+\sqrt{n}d(n))C_6(n)^{\frac 2 n}.\notag
\end{align}
Assume that  $n^\frac{1}{2}C_6(n)^\frac{1}{n}D(n)<1$, then since $$nH\le \sqrt{n}|A|,$$ we have $nD(n)\|H\|_n<1$. Using Lemma \ref{lem:2.3}, we get an upper bound for $(\int_M |\phi |Å||^{\frac {2n} {n-2}}\text{d}v)^{\frac {n-2} n}$ as follows
$$(\int_M |\phi |Å||^{\frac {2n} {n-2}}\text{d}v)^{\frac {n-2} n}\le C_{13}(n)\int_M |\nabla(\phi |Å|)|^2\text{d}v.$$
Substituting the above inequality into (\ref{eq:4.2}), we obtain
\begin{align} \label{eq:4.3}
(1+\frac 2 n)\int_M \phi^2|\nabla|Å||^2\text{d}v &\le b(m)\int_M \phi^2 |Å|^4\text{d}v-nH\int_M \phi^2 \sum\limits_{\alpha}tr(Å_{n+1}Å_{\alpha}^{2})\text{d}v \notag \\
&-2\int_M \phi |Å|\langle \nabla|Å|,\nabla \phi \rangle \text{d}v+n\int_M \phi^2|Å|^2\text{d}v \notag \\
&\le C_{13}(n) \Gamma \int_M|\nabla(\phi |Å|)|^2\text{d}v+n\int_M |Å|^2 \phi^2 \text{d}v \notag \\
&-2\int_M \phi |Å|\langle \nabla|Å|,\nabla \phi \rangle \text{d}v.
\end{align}
By  Lemma \ref{lem:2.4} we get
$$\int_M |Å|^2\phi^2\text{d}v \le \frac {4}{(n-1)^2(1-D(n)n||H||_n)^2}\int_M |\nabla(\phi|Å|)|^2\text{d}v. $$
This together with (\ref{eq:4.3}) imply that
\begin{align} \label{eq:4.4}
(1+\frac 2 n)\int_M \phi^2|\nabla|Å||^2\text{d}v \le L \int_M|\nabla(\phi |Å|)|^2\text{d}v-2\int_M \phi |Å|\langle \nabla|Å|,\nabla \phi \rangle \text{d}v,
\end{align}
where $L=\Gamma(\frac{D(n)}{1-D(n)n||H||_n} \frac{2(n-1)}{n-2})^2+\frac {4n}{(n-1)^2(1-D(n)n||H||_n)^2}.$
We further choose the constant $C_6(n) (n\geq5)$ in Theorem \ref{th:1.3} sufficiently small such that $$1+\frac{2}{n}-L>0.$$ 
Then
\begin{align} \label{eq:4.5}
&(1+\frac 2 n-L)\int_M \phi^2|\nabla|Å||^2\text{d}v \notag \\
&\le L \int_M|\nabla \phi|^2 |Å|^2\text{d}v+2(L-1)\int_M \phi |Å|\langle \nabla|Å|,\nabla \phi \rangle \text{d}v.
\end{align}
Using Cauchy's inequality for arbitrary $\theta>0$, we have
\begin{align} 
&2(L-1)\int_M \phi |Å|\langle \nabla|Å|,\nabla \phi \rangle \text{d}v
\notag \\
&\le |L-1|\theta \int_M \phi^2|\nabla|Å||^2\text{d}v+\frac{|L-1|}{\theta}\int_M|\nabla \phi|^2|Å|^2\text{d}v. \notag
\end{align}
When $L\ne1$, we choose $\nu$ such that $0<\nu \le 1+\frac2 n -L$ and
setting $\theta=\frac{\nu}{2|L-1|},$ we get
$$\int_M \phi^2|\nabla|Å||^2\text{d}v \le \rho \int_M|\nabla \phi|^2 |Å|^2\text{d}v,$$
where $\rho=\frac{L+\frac{2|L-1|^2}{\nu}}{1+\frac{2}{n}-L-\frac{\nu}{2}}.$

Fix a point $q\in M$, we choose $\phi$ to be a cut-off function with the properties
\begin{align*}
\phi=\begin{cases}
1, \qquad  \text{on } B_R(q),    \\
0,   \qquad  \text{on } M \setminus B_{2R}(q),  \\ 
0\le \phi \le1,|\nabla \phi|\le \frac{C_{14}(n)}{R},  \qquad  \text{on } B_{2R}(q) \setminus B_R(q). 
\end{cases}
\end{align*}
Substituting it into the above inequality, we get
\begin{align}\label{eq:4.6}
&\int_{B_R(q)} |\nabla|Å||^2\text{d}v \nonumber
\\ &=\int_M \phi^2|\nabla|Å||^2\text{d}v  \notag \\
&\le \rho \int_{B_{2R}(q)}|\nabla \phi|^2 |Å|^2\text{d}v \notag \\
&\le \frac{C_{14}(n)^2\rho}{R^2}\int_{B_{2R}(q)}|Å|^2\text{d}v.
\end{align}
Letting $R\rightarrow \infty$ in the above inequality we have
$$\int_M |\nabla|Å||^2\text{d}v=0.$$
Therefore, $|Å|=d=constant$. If $d\ne 0$, we know from (\ref{eq:1.1}) that
$$\limsup_{R\to \infty}\frac{Vol[B_R(q)]}{R^2}=0.$$
It then follows from \cite{CY1}  that $\lambda_1(M)=0$ which contradicts with (\ref{eq:2.7}). Hence $|Å| = 0,$ i.e. $M$ is umbilical. Furthermore $M$ is totally geodesic, since it is noncompact. %This completes the proof of Theorem \ref{th:1.3}.
\\
\\\textbf{Proof of Theorem \ref{main thm2}.} Note that
\begin{align*}
|nH\int_M \phi^2 \sum\limits_{\alpha}tr(Å_{n+1}Å_{\alpha}^{2})\text{d}v|&\le nd(n)H\int_M \phi^2 |Å_{n+1}||Å|^2\text{d}v
\\&\le nd(n)H\int_M \phi^2|Å|^3\text{d}v
\\&\le nd(n)H(\int_M|Å|^\frac{n}{2}\text{d}v)^\frac{2}{n}(\int_M |\phi |Å||^{\frac {2n} {n-2}}\text{d}v)^{\frac {n-2} n}
\end{align*}
and 
$$b(m)\int_M \phi^2|Å|^4\text{d}v \le b(m)(\int_M |Å|^{n}\text{d}v)^{\frac 2 n}(\int_M |\phi |Å||^{\frac {2n} {n-2}}\text{d}v)^{\frac {n-2} n}.$$
Then if $nC_7(n)D(n)<1$, by Lemma \ref{lem:2.3} we have
$$(\int_M |\phi |Å||^{\frac {2n} {n-2}}\text{d}v)^{\frac {n-2} n}\le C_{13}(n)\int_M |\nabla(\phi |Å|)|^2\text{d}v.$$
Substituting the above inequality into (\ref{eq:4.2}), we obtain
\begin{align} \label{eq:4.3'}
(1+\frac 2 n)\int_M \phi^2|\nabla|Å||^2\text{d}v &\le b(m)\int_M \phi^2 |Å|^4\text{d}v-nH\int_M \phi^2 \sum\limits_{\alpha}tr(Å_{n+1}Å_{\alpha}^{2})\text{d}v \notag \\
&-2\int_M \phi |Å|\langle \nabla|Å|,\nabla \phi \rangle \text{d}v+n\int_M \phi^2|Å|^2\text{d}v \notag \\
&\le C_{13}(n) b(m)C_8(n) \int_M|\nabla(\phi |Å|)|^2\text{d}v+n\int_M |Å|^2 \phi^2 \text{d}v \notag \\
&-2\int_M \phi |Å|\langle \nabla|Å|,\nabla \phi \rangle \text{d}v.
\end{align}
By  Lemma \ref{lem:2.4} we get
$$\int_M |Å|^2\phi^2\text{d}v \le \frac {4}{(n-1)^2(1-D(n)n||H||_n)^2}\int_M |\nabla(\phi|Å|)|^2\text{d}v. $$
This together with (\ref{eq:4.3'}) imply that
\begin{align} \label{eq:4.4'}
(1+\frac 2 n)\int_M \phi^2|\nabla|Å||^2\text{d}v \le L \int_M|\nabla(\phi |Å|)|^2\text{d}v-2\int_M \phi |Å|\langle \nabla|Å|,\nabla \phi \rangle \text{d}v,
\end{align}
where $L=b(m)C_8(n)(\frac{D(n)}{1-D(n)n||H||_n} \frac{2(n-1)}{n-2})^2+\frac {4n}{(n-1)^2(1-D(n)n||H||_n)^2}.$ 

Note that if $n\geq5$ and $C_7(n), C_8(n)$ are sufficiently small, we have $1+\frac{2}{n}-L>0$ . Then following arguments used in last part of the proof of Theorem \ref{th:1.3} we can prove that $M$ is totally geodesic.

\section{Proof of Theorem \ref{main theorem}}
Proof. Assume that $M^n$ is a complete hypersurface in $\mathbb{H}^{n+1}$ with scalar curvature $R=n(1-n)$. Let $B_r$ be a geodesic ball centered at a fixed point of $M^n$ with radius $r$, and  choose $\psi$ to be a cut-off function with the properties
\begin{align*}
\gamma=\begin{cases}
1, \qquad  \text{on } B_R,    \\
0,   \qquad  \text{on } M \setminus B_{2R},  \\ 
0\le \psi \le1,|\nabla \psi|\le \frac{C_{15}(n)}{R},  \qquad  \text{on } B_{2R} \setminus B_R. 
\end{cases}
\end{align*}

%$\gamma$ a cut off function on $M$ which
%satisfies the following properties:
%\begin{eqnarray*}
%\gamma \in C_{0}^{1}(B_{2R}),\quad \gamma=1 \ \text{on} \ B_R, \quad and \ |\nabla \gamma|\le \frac {C_2}{R},
%\end{eqnarray*}
%where $C_2$ is a constant depending only on dimension $n$.

We denote by $f=|H|$. Multiplying $\psi^2 f^{2k-2}(k\ge 1)$ on both sides of inequality (\ref{eq:2.5}) and integrating it over $M$ we obtain
\begin{eqnarray}
\int_M \psi^2 f^{2k-2}\Box H\text{d}v\ge n(1-n)\int_M \psi^2 f^{2k}\text{d}v-2 n^3\int_M \psi^2 f^{2k+2}\text{d}v.\label{eq:3.1}
\end{eqnarray}
By integrating by parts and Cauchy-Schwarz's inequality, we have
\begin{align}
&\int_M \psi^2 f^{2k-2}\Box H\text{d}v  \notag\\
& = 2\int_M \psi f^{2k-2}(-P_1)(\nabla \psi,\nabla H)\text{d}v +2(k-1) \int_M \psi^2 f^{2k-3}(-P_1)(\nabla H,\nabla |H|)\text{d}v\notag  \\
& +\int_M \psi^2 f^{2k-2}\sum\limits_{i,j}(-P_1)_{ijj}H_i\text{d}v   \notag  \\
&=2\int_M \psi f^{2k-2}(-P_1)(\nabla \psi,\nabla H)\text{d}v+2(k-1) \int_M \psi^2 f^{2k-3}(-P_1)(\nabla H,\nabla |H|)\text{d}v  \notag \\
&\le 2\int_M \psi f^{2k-2}(-P_1)(\nabla \psi,\nabla H)\text{d}v-2\delta(k-1) \int_M \psi^2 f^{2k-2}|\nabla H|^2\text{d}v  \notag \\
&\le 2\int_M \psi f^{2k-2}|P_1||\nabla \psi||\nabla H|\text{d}v-2\delta(k-1) \int_M \psi^2 f^{2k-2}|\nabla H|^2\text{d}v , \notag 
\end{align}
where in the second equality we used
\begin{eqnarray*}
&&\sum\limits_{i,j}(-P_1)_{ijj}H_i
\\&=&\sum\limits_{i,j}(h_{ijj}H_i-nH_jH_i\delta_{ij})
\\&=&\sum\limits_{i,j}(h_{jji}H_i-nH_jH_i\delta_{ij})
\\&=&0.
\end{eqnarray*}
Note that 
\begin{eqnarray*}
|P_1|^2&=&|A|^2-2n^2H^2+n^3H^2
\\&=&n^2(n-1)H^2
\\&=&n^2(n-1)f^2.
\end{eqnarray*}
By Cauchy-Schwarz's inequality for any $\tau$ such that $0<\tau<2$, we have
\begin{align}
&\int_M \psi^2 f^{2k-2}\Box H\text{d}v  \notag \\
&\le 2n\sqrt{n-1}\int_M \psi f^{2k-1}|\nabla \psi||\nabla H|\text{d}v-2\delta(k-1) \int_M \psi^2 f^{2k-2}|\nabla H|^2\text{d}v  \notag\\
&\le \tau \delta (k-1)\int_M \psi^2 f^{2k-2}|\nabla H|^2\text{d}v+\frac{n^2 (n-1)}{\tau \delta (k-1)}\int_M |\nabla \psi|^2 f^{2k}\text{d}v  \notag\\
&-2\delta(k-1) \int_M \psi^2 f^{2k-2}|\nabla H|^2\text{d}v\notag\\
&\le (\tau-2) \delta (k-1)\int_M \psi^2 f^{2k-2}|\nabla f|^2\text{d}v+\frac{n^2 (n-1)}{\tau \delta (k-1)}\int_M |\nabla \psi|^2 f^{2k}\text{d}v  \notag\\
&=(\tau-2) \delta \frac{k-1}{k^2}\int_M \psi^2 |\nabla f^k|^2 \text{d}v+\frac{n^2 (n-1)}{\tau \delta (k-1)}\int_M |\nabla \psi|^2 f^{2k}\text{d}v. \notag
\end{align}
By the above inequality and (\ref{eq:3.1}) we finally arrive at
\begin{align}
&(2-\tau) \delta \frac{k-1}{k^2}\int_M \psi^2 |\nabla f^k|^2 \text{d}v+n(1-n)\int_M \psi^2f^{2k}\text{d}v\notag\\
&\le 2n^3\int_M\psi^2f^{2k+2}\text{d}v+\frac{n^2 (n-1)}{\tau \delta (k-1)}\int_M |\nabla \psi|^2 f^{2k}\text{d}v. \notag
\end{align}
Let $k=\frac n 2$ in the above inequality we obtain
\begin{align}
&(2-\tau) \delta \frac{2n-4}{n^2}\int_M \psi^2 |\nabla f^{\frac n 2}|^2 \text{d}v+n(1-n)\int_M \psi^2f^{n}\text{d}v\notag\\
&\le 2n^3\int_M\psi^2f^{n+2}\text{d}v+\frac{2n^2 (n-1)}{\tau \delta (n-2)}\int_M |\nabla \psi|^2 f^{n}\text{d}v. \label{eq:3.2}
\end{align}
Assume that $D(n)nC_{12}(n,\delta) <1$. By Lemma \ref{lem:2.3} we have
\begin{align}
\int_M \psi^2 f^{n+2}\text{d}v &=\int_M (\psi f^{\frac n 2})^2f^2 \text{d}v \notag\\
&\le (\int_M f^{n}\text{d}v)^{\frac 2 n}(\int_M ( \psi f^{\frac n 2})^{\frac{2n}{n-2}}\text{d}v)^{\frac{n-2}{n}}\notag \\
&\le C_{12}(n,\delta)^2 C_{13}(n)\int_M |\nabla (\psi f^{\frac n 2})|^2\text{d}v   \notag \\
&\le 2C_{12}(n,\delta)^2 C_{13}(n) (\int_M |\nabla \psi|^2 f^{n}\text{d}v+\int_M\psi^2 |\nabla f^{\frac n 2}|^2 \text{d}v). \notag
\end{align}
Combing the above two inequalities we get
\begin{align}
E_1\int_M \psi^2 |\nabla f^{\frac n 2}|^2 \text{d}v+E_2\int_M \psi^2f^{n}\text{d}v\le E_3\int_M |\nabla \psi|^2 f^{n}\text{d}v, \label{eq:3.4}
\end{align}
where $$E_1=(2-\tau) \delta \frac {2n-4}{n^2}-4n^3C_{12}(n,\delta)^2 C_{13}(n),$$
$$E_2=(1-n)n,$$
$$E_3=\frac {2n^2(n-1)}{\tau \delta(n-2)}+4n^3C_{12}(n,\delta)^2 C_{13}(n).$$
Using Lemma \ref{lem:2.4} and Cauchy-Schwarz's  inequality we have for any $\kappa>0$
\begin{align}
\int_M \psi^2 f^n\text{d}v &\le \frac{4}{(n-1)^2(1-nD(n)||H||_n)^2}\int_M |\nabla (\psi f^{\frac{n}{2}})|^2\text{d}v  \notag  \\
&\le \frac{4}{(n-1)^2(1-nD(n)||H||_n)^2}[(1+\frac {1} {\kappa})\int_M |\nabla \psi|^2 f^{n}\text{d}v+(1+\kappa)\int_M \psi^2|\nabla f^{\frac n 2}|^2 \text{d}v].  \notag 
\end{align}
Substituting the above inequality into (\ref{eq:3.4}) yields
$$F_1\int_M \psi^2|\nabla f^{\frac n 2}|^2\text{d}v \le F_2\int_M |\nabla \psi|^2 f^n\text{d}v, $$
where
\begin{align}
F_1&=(1-n)n \frac{4(1+\kappa)}{(n-1)^2(1-nD(n)||H||_n)^2}+(2-\tau) \delta \frac {2n-4}{n^2}-4n^3C_{12}(n,\delta)^2 C_{13}(n), \notag
\end{align}
\begin{align}
F_2&=-[(1-n)n \frac{4(1+\frac {1} {\kappa})}{(n-1)^2(1-nD(n)||H||_n)^2 }]+\frac {2n^2(n-1)}{\tau \delta(n-2)}+4n^3C_{12}(n,\delta)^2 C_{13}(n). \notag
\end{align}
Since $\delta>\frac{n^3}{(n-1)(n-2)}$, when $\tau, \kappa$ and $C_{12}(n,\delta)$ are sufficiently small we have $F_1>0$.
Then there exists a constant $C_{16}(n,\delta)$ depending only on $n$ and $\delta$ such that
$$\int_M \psi^2 |\nabla |H|^{\frac n 2}|^2\text{d}v \le \frac{C_{16}(n,\delta)}{R^2}\int_M |H|^n\text{d}v.$$
Letting $R\rightarrow +\infty$ we get
$$\int_M |\nabla |H|^{\frac n 2}|^2\text{d}v=0, $$
which implies $H=0$ by Lemma \ref{lem:2.4}. Then by (\ref{eq:2.2}) we see that $A=0$, i.e. $M$ is totally geodesic.
%This completes the proof of Theorem \ref{main theorem}.

\section*{Acknowledgments}
This work is supported by the NSF of China (Grant No. 12271069) and the NSF of Chongqing (Grant No. CSTB2024NSCQ-LZX0093). The authors would like to thank Professor Hezi Lin for his interests and stimulating discussions on this subject.
{}
\vspace{1cm}\sc

Yong Luo

Mathematical Science Research Center of Mathematics,

Chongqing University of Technology,

Chongqing, 400054, China

{\tt yongluo-math@cqut.edu.cn}

\vspace{1cm}\sc
Jianling Liu

Mathematical Science Research Center of Mathematics,

Chongqing University of Technology,

Chongqing, 400054, China

{\tt 18315037508@163.com}

\end{document}